\numberwithin{equation}{section}
\begin{document}

\newtheorem{thm}{Theorem}[section]
\newtheorem{prop}[thm]{Proposition}
\newtheorem{lem}[thm]{Lemma}
\newtheorem{cor}[thm]{Corollary}
\newtheorem{ex}{Example}[section]      

\newtheorem{rem}[thm]{Remark}

\newtheorem*{defn}{Definition}

\newcommand{\DD}{\mathbb{D}}
\newcommand{\NN}{\mathbb{N}}
\newcommand{\ZZ}{\mathbb{Z}}
\newcommand{\QQ}{\mathbb{Q}}
\newcommand{\RR}{\mathbb{R}}
\newcommand{\CC}{\mathbb{C}}
\renewcommand{\SS}{\mathbb{S}}

\renewcommand{\theequation}{\arabic{section}.\arabic{equation}}

\newcommand{\usc}{\mathrm{USC}}
\newcommand{\lsc}{\mathrm{LSC}}

\newcommand{\Erfc}{\mathop{\mathrm{Erfc}}}    
\newcommand{\supp}{\mathop{\mathrm{supp}}}    
\newcommand{\re}{\mathop{\mathrm{Re}}}   
\newcommand{\im}{\mathop{\mathrm{Im}}}   
\newcommand{\dist}{\mathop{\mathrm{dist}}}  
\newcommand{\link}{\mathop{\circ\kern-.35em -}}
\newcommand{\spn}{\mathop{\mathrm{span}}}   
\newcommand{\ind}{\mathop{\mathrm{ind}}}   
\newcommand{\rank}{\mathop{\mathrm{rank}}}   
\newcommand{\ol}{\overline}
\newcommand{\pa}{\partial}
\newcommand{\ul}{\underline}
\newcommand{\diam}{\mathrm{diam}}
\newcommand{\lan}{\langle}
\newcommand{\ran}{\rangle}
\newcommand{\tr}{\mathop{\mathrm{tr}}}
\newcommand{\diag}{\mathop{\mathrm{diag}}}
\newcommand{\dv}{\mathop{\mathrm{div}}}
\newcommand{\na}{\nabla}
\newcommand{\nr}{\Vert}

\newcommand{\al}{\alpha}
\newcommand{\be}{\beta}
\newcommand{\ga}{\gamma}  
\newcommand{\Ga}{\Gamma}
\newcommand{\de}{\delta}
\newcommand{\De}{\Delta}
\newcommand{\ve}{\varepsilon}
\newcommand{\fhi}{\varphi} 
\newcommand{\la}{\lambda}
\newcommand{\La}{\Lambda}    
\newcommand{\ka}{\kappa}
\newcommand{\si}{\sigma}
\newcommand{\Si}{\Sigma}
\newcommand{\te}{\theta}
\newcommand{\zi}{\zeta}
\newcommand{\om}{\omega}
\newcommand{\Om}{\Omega}

\newcommand{\cA}{\mathcal{A}}
\newcommand{\cC}{\mathcal{C}}
\newcommand{\cE}{\mathcal{E}}
\newcommand{\cG}{{\mathcal G}}
\newcommand{\cH}{{\mathcal H}}
\newcommand{\cI}{{\mathcal I}}
\newcommand{\cJ}{{\mathcal J}}
\newcommand{\cK}{{\mathcal K}}
\newcommand{\cL}{{\mathcal L}}
\newcommand{\cM}{\mathcal{M}}
\newcommand{\cN}{{\mathcal N}}
\newcommand{\cP}{\mathcal{P}}
\newcommand{\cR}{{\mathcal R}}
\newcommand{\cS}{{\mathcal S}}
\newcommand{\cT}{{\mathcal T}}
\newcommand{\cU}{{\mathcal U}}
\newcommand{\cX}{\mathcal{X}}

\newcommand{\esssup}{\mathop{\mathrm{esssup}}}   
\newcommand{\essinf}{\mathop{\mathrm{essinf}}}   
\newcommand{\sgn}{\mathrm{sgn}}   

\newcommand{\loc} {\mathrm{loc}}

\title[Variational $p$-harmonious functions]{Variational $p$-harmonious functions:\\
 existence and convergence \\ to $p$-harmonic functions}

\author{E. W. Chandra}
\address{Department of Systems Innovation
Graduate School of Engineering Science, Osaka University,
1-3 Machikaneyama, Toyonaka, Osaka 560-8531, Japan}

\author{M. Ishiwata}
\address{Department of Systems Innovation
Graduate School of Engineering Science, Osaka University,
1-3 Machikaneyama, Toyonaka, Osaka 560-8531, Japan}
\email{ishiwata@sigmath.es.osaka-u.ac.jp}
\urladdr{}

\author{R. Magnanini}
\address{Dipartimento di Matematica e Informatica ``U. Dini'', Universit\`a di Firenze, viale Morgagni 67/A, 50134, Florence, Italy}
\email{rolando.magnanini@unifi.it}
\urladdr{}

\author{H. Wadade}
\address{Faculty of Mechanical Engineering, Institute of Science and Engineering, Kanazawa University,
Kakuma, Kanazawa, Ishikawa 920-1192, Japan}
\email{wadade@se.kanazawa-u.ac.jp}
\urladdr{}

\begin{abstract}
In a recent paper, the last three authors showed that a game-theoretic $p$-harmonic function $v$ is characterized by an asymptotic mean value property with respect to a kind of mean value $\nu_p^r[v](x)$ defined variationally on balls $B_r(x)$.
In this paper, in a domain $\Om\subset\RR^N$, $N\ge 2$, 
we consider the operator $\mu_p^\ve$, acting on continuous functions on $\ol{\Om}$, defined by the formula $\mu_p^\ve[v](x)=\nu^{r_\ve(x)}_p[v](x)$, where $r_\ve(x)=\min[\ve,\dist(x,\Ga)]$ and $\Ga$ denotes the boundary of $\Omega$.  
We first derive various properties of $\mu^\ve_p$ 
such as continuity and monotonicity. 
Then, we prove the existence and uniqueness 
of a function $u^\ve\in C(\ol{\Om})$ satisfying the Dirichlet-type problem:
$$
u(x)=\mu_p^\ve[u](x) \ \mbox{ for every } \ x\in\Om,\quad u=g \ \mbox{ on } \ \Ga,
$$
for any given function $g\in C(\Ga)$. This result holds, if we assume the existence of a suitable notion of barrier for all points in $\Ga$. That $u^\ve$ is what we call the \textit{variational} $p$-harmonious function with Dirichlet boundary data $g$, and
is obtained by means of a Perron-type method based on a comparison principle. 
\par
We then show that the family $\{ u^\ve\}_{\ve>0}$ gives an approximation scheme for the viscosity solution $u\in C(\ol{\Om})$ of
$$
\De_p^G u=0 \ \mbox{ in }\Om, \quad u=g \ \mbox{ on } \ \Ga,
$$
where $\De_p^G$ is the so-called game-theoretic (or homogeneous) $p$-Laplace operator.
In fact, we prove that $u^\ve$ converges to $u$, uniformly on $\ol{\Om}$ as $\ve\to 0$. 
\end{abstract}

\keywords{$p$-harmonic function, asymptotic mean value property, 
$p$-mean value, viscosity solution for $p$-Laplace equation}
\subjclass[2010]{Primary 35J60; Secondary 35J92, 35K55, 35K92}

\maketitle

\raggedbottom

\section{Introduction and main results}
\noindent

In this paper, we introduce and study what we call \textit{variationally $p$-harmonious functions}. Their definition is based on a variational notion of average for functions in a Lebesgue space. In fact, we define a \textit{$p$-mean} of a function $v\in L^p(E)$ as a real number $\nu_p[v]$ 
such that 
\begin{equation}
\label{def-p-mean}
\|v-\nu_p[v]\|_{p,E}=\min_{\nu\in\RR}\|v-\nu\|_{p,E}.
\end{equation}
Here, $L^p(E)$ denotes the standard Lebesgue space for $1\le p\le\infty$ (equipped with a suitably normalized norm), where $E$ is a Lebesgue measurable set in $\RR^N$ with finite measure. The existence and uniqueness of $\nu_p[v]$ is guaranteed for $1<p<\infty$, since it is the projection of $v$ on the subspace of constant functions.  Also,  $\nu_\infty[v]$ can be explicitly determined. When $p=1$, in order to uniquely determine $\nu_1[v]$, we need to assume that $v$ is continuous in an open set $E$. In general, $\nu_p[v]$ is nonlinear in $v$, unless $p$ equals $2$, in which case we recover the standard mean value of a function.
\par
If $\Om$ is a bounded domain in $\RR^N$, we can associate with $\nu_p$ the operator $\mu_p^\ve$ that acts on a function $v\in C(\ol{\Om})$ --- the space of continuous functions on $\ol{\Om}$ --- by the rule
$$
\mu_p^\ve[v](x)=\nu^{r_\ve(x)}_p[v](x) \ \mbox{ for } \ x\in\ol{\Om},
$$
where $\nu^r_p[v]$ denotes the $p$-mean of $v$, when we choose $E$ as the ball $B_r(x)$, centered at $x\in\RR^N$ and with radius $r>0$, and  $r_\ve(x)=\min[\ve,\dist(x,\Ga)]$ for $x\in\ol{\Om}$. Here,  $\dist(x,\Ga)$ denotes the distance of a point $x\in\ol{\Om}$ to the boundary $\Ga$ of $\Om$ (see Section \ref{subsec:p-mean} for details). 
The use of the function $r_\ve$ is inspired by the work in \cite{HR, HR2}, in which, for the planar case, it is associated to another family of nonlinear means.  
\par
By Theorem \ref{thm:semicontinuity} below, $\mu_p^\ve$ maps the space $C(\ol{\Om})$ into itself.
Thus, it is natural to study the set of functions $v\in C(\Om)$
satisfying the fixed-point equation:
$$
v=\mu_p^\ve[v] \ \mbox{ in } \ \Om.
$$ 
We call these functions
\textit{variationally $p$-harmonious} in $\Om$.
Affine functions are $p$-harmonious for any $p\in[1,\infty]$. Classical harmonic functions are $2$-harmonious. For $p\ne 2$, $p$-harmonic functions are not a subset of $p$-harmonious functions. Needless to say, functions that satisfy in $\Om$ the inequalities $v\le\mu_p^\ve[v]$ or $v\ge\mu_p^\ve[v]$ will be named in this paper \textit{variationally $p$-subharmonious} or \textit{$p$-superharmonious}.
\par
Harmonious functions have been introduced in \cite{LeG}, for the case $p=\infty$, in relationship with the so-called absolutely minimizing Lipschitz extensions. Classes of $p$-harmonious functions for a general $p\in(1,\infty]$ have been introduced in \cite{MPR0, MPR}, and in \cite{HR, HR2} also for the case $p=1$ in the plane, in connection with dynamic programming principles associated with \textit{tug-of-war} games (see also \cite{PS, PSSW}). 
We also refer the reader to \cite{AHP, CLM, FFGS, HR, HR2, LM, LMR, LP} for other related works that motivate these issues. 
\par
The classes of $p$-harmonious functions just mentioned are based on a definition of $p$-mean different from that given in \eqref{def-p-mean}. That mean, although defined explicitly, occasionally fails to fulfil certain desired properties, such as monotonicity and continuity.  Instead, the $p$-mean defined variationally in \eqref{def-p-mean}, whose study in the context of $p$-harmonious functions was introduced in \cite{IMW}, naturally enjoys those properties. In particular, as we shall see in Section \ref{sec:limits-harmonious-functions}, it satisfies the structural requirements, codified in \cite{TMP}, which yield the convergence of the underlying dynamic programming principles to $p$-harmonic functions.
\par
In Section \ref{sec:variationally-harmonious-functions}, we will show that variationally $p$-harmonious functions satisfy the weak and strong comparison principle, despite they are not solutions of a partial differential equation. 
These results are a by-product of the variational definition in \eqref{def-p-mean} and are corollaries of analogous ones derived in Section \ref{sec:p-means-and-operator} for the operator $\mu_p^\ve$.
\par
Once these local properties of variationally $p$-harmonious functions have been established, we consider the Dirichlet-type problem of finding a function $u=u^\ve$ in $C(\ol{\Om})$ such that
\begin{equation}
\label{dirichlet-harmonious}
u=\mu_p^\ve[u] \ \mbox{ in } \ \Om,\quad u=g \ \mbox{ on } \ \Ga.
\end{equation}
Here, $g$ is any given function in $C(\Ga)$. In order to obtain existence and uniqueness for this problem, we proceed in the wake of classical Perron's method for harmonic functions. In fact, we define two classes of continuous functions on $\ol{\Om}$, $\cS_g$ and $\cS^g$, which contain what we call the \textit{variational $p$-subsolutions and $p$-supersolutions} of problem \eqref{dirichlet-harmonious}, respectively. Precisely, $v\in\cS_g$ (resp. $v\in\cS^g$)  if and only if
$$
v\le\mu_p^\ve[v] \ \mbox{ (resp. $v\le\mu_p^\ve[v]$)} \ \mbox{ in } \ \Om,\quad v\le g \ \mbox{ (resp. $v\ge g$)} \ \mbox{ on } \ \Ga.
$$
The desired solution $u^\ve$ of \eqref{dirichlet-harmonious} is then obtained by checking that the functions defined by
$$
\ul{u}^\ve(x)=\sup_{v\in S_g}v(x), \quad
\ol{u}^\ve(x)=\inf_{w\in S^g}w(x) \ \mbox{ for } \ x\in\ol{\Om}
$$
coincide on $\ol{\Om}$, provided $\Ga$ satisfies some sufficient regularity assumption. Then, $u^\ve$ can be defined by $u^\ve\equiv \ul{u}^\ve=\ol{u}^\ve$ on $\ol{\Om}$, and the assumed regularity assumptions on $\Ga$ guarantee that $u^\ve$ satisfies the boundary condition in \eqref{dirichlet-harmonious}, for every given $g\in C(\Ga)$.
\par
In fact, similarly to what done in the classical case, the existence of a solution of \eqref{dirichlet-harmonious} is related to a suitable notion of barrier: given a point $x_0\in\Ga$, a \textit{barrier} at $x_0$ for the problem \eqref{dirichlet-harmonious}
is a continuous function $w=w^{x_0}$ on $\ol{\Om}$, which is variationally $p$-superharmonious in $\Om$, positive in $\ol{\Om}\setminus\{x_0\}$, and such that $w(x_0)=0$. If $x_0\in\Ga$ admits a barrier, we say that $x_0$ is a \textit{regular point} for the Dirichlet problem \eqref{dirichlet-harmonious}.
Our existence and uniqueness result then reads as follows.
\begin{thm}[Existence and uniqueness of $p$-harmonious functions]
\label{thm:existence-uniqueness-harmonious}
Let $\Om$ be a bounded domain in $\RR^N$ and take $\ve_0>0$ such that $\Om$ contains at least a ball of radius $\ve_0$. Then, for every $\ve\in(0,\ve_0]$, the Dirichlet-type problem \eqref{dirichlet-harmonious} admits a unique solution $u^\ve$ of class $C(\ol{\Om})$ 
for every $g\in C(\Ga)$ if and only if all points in $\Ga$ are regular for \eqref{dirichlet-harmonious}. 
\end{thm}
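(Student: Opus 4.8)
The plan is to prove the two implications separately: the sufficiency (all boundary points regular $\Rightarrow$ existence and uniqueness) by a Perron-type construction resting on the comparison principle for $\mu_p^\ve$ from Section~\ref{sec:p-means-and-operator}, and the necessity (solvability for every $g$ $\Rightarrow$ regularity) by manufacturing an explicit barrier out of a conveniently chosen solution. Fix $\ve\in(0,\ve_0]$ and $g\in C(\Ga)$, and consider the envelopes $\ul u^\ve=\sup_{v\in\cS_g}v$ and $\ol u^\ve=\inf_{w\in\cS^g}w$. I would first check that the classes are nonempty and the envelopes bounded: since affine, and hence constant, functions are variationally $p$-harmonious, the constants $\min_\Ga g$ and $\max_\Ga g$ lie in $\cS_g$ and $\cS^g$ respectively. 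The comparison principle then gives $v\le w$ for every $v\in\cS_g$, $w\in\cS^g$, so that $\min_\Ga g\le\ul u^\ve\le\ol u^\ve\le\max_\Ga g$ on $\ol\Om$. Being a supremum (resp. infimum) of continuous functions, $\ul u^\ve$ is lower semicontinuous and $\ol u^\ve$ upper semicontinuous; this is the observation that will finally yield continuity of the solution, once the reverse inequality $\ol u^\ve\le\ul u^\ve$ is secured.

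The core step is to show that $\ul u^\ve$ is itself variationally $p$-harmonious in $\Om$, that is $\ul u^\ve=\mu_p^\ve[\ul u^\ve]$. The subsolution inequality $\ul u^\ve\le\mu_p^\ve[\ul u^\ve]$ follows from monotonicity: each $v\in\cS_g$ satisfies $v\le\mu_p^\ve[v]\le\mu_p^\ve[\ul u^\ve]$, and taking the supremum over $v$ gives the claim, where I use that $\cS_g$ is stable under finite maxima, since $\max(v_1,v_2)\le\mu_p^\ve[\max(v_1,v_2)]$ again by monotonicity. For the reverse inequality I would argue by contradiction along the classical lifting idea, adapted to the nonlocal operator: if $\ul u^\ve(x_*)<\mu_p^\ve[\ul u^\ve](x_*)$ at some $x_*\in\Om$, then replacing $\ul u^\ve$ by $\mu_p^\ve[\ul u^\ve]$ on a small ball about $x_*$ and exploiting the continuity and monotonicity of $\mu_p^\ve$ produces a subsolution strictly larger than $\ul u^\ve$ near $x_*$, contradicting maximality. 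Hence $\ul u^\ve$ is a fixed point, and the symmetric argument applies to $\ol u^\ve$.

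Next comes the boundary condition, which is exactly where regularity enters. Because $r_\ve(x)\to 0$ as $x\to\Ga$, the balls defining $\mu_p^\ve$ shrink to points near the boundary, so a barrier $w^{x_0}$ at $x_0\in\Ga$ can be scaled and shifted to trap $\ul u^\ve$ and $\ol u^\ve$ between $g(x_0)\pm\eta$ in a neighbourhood of $x_0$; letting $\eta\to 0$ shows that both envelopes attain the value $g(x_0)$ continuously at every boundary point. Thus $\ul u^\ve,\ol u^\ve\in C(\ol\Om)$ with boundary trace $g$, so that $\ul u^\ve\in\cS^g$ and $\ol u^\ve\in\cS_g$; the comparison principle now forces $\ol u^\ve\le\ul u^\ve$, and the two envelopes coincide. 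Their common value $u^\ve$ is simultaneously lower and upper semicontinuous, hence continuous, solves the fixed-point equation, and equals $g$ on $\Ga$. Uniqueness is immediate: any two solutions are at once sub- and supersolutions, so the comparison principle makes them agree. For the necessity, assume solvability in $C(\ol\Om)$ for every $g\in C(\Ga)$ and fix $x_0\in\Ga$. Taking the datum $g(x)=|x-x_0|$, the resulting solution $w$ is variationally $p$-harmonious, hence $p$-superharmonious, with $w(x_0)=0$ and $w>0$ on $\Ga\setminus\{x_0\}$; the strong comparison principle excludes an interior minimum equal to $0$, so $w>0$ throughout $\Om$, whence $w$ is a barrier at $x_0$ and $x_0$ is regular.

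The step I expect to be the main obstacle is the fixed-point property of $\ul u^\ve$, and specifically its supersolution (reverse) inequality: since $\mu_p^\ve$ is nonlocal, the classical pointwise lifting must be replaced by a genuinely nonlocal modification on a small ball, and its admissibility as a competitor in $\cS_g$ has to be verified with care, using the continuity and monotonicity of $\mu_p^\ve$ together with the boundary behaviour of $r_\ve$. The barrier argument for the boundary trace is the second delicate point, although it becomes routine once the scaling of the barrier against the shrinking radius $r_\ve$ is correctly arranged.
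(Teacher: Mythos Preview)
Your outline has a genuine gap at exactly the point you flagged as the main obstacle: the reverse inequality $\ul u^\ve\ge\mu_p^\ve[\ul u^\ve]$ via a lifting argument. The classical Poisson modification works because one can \emph{solve} the Dirichlet problem on small balls and the replacement produces a continuous competitor. Here neither ingredient is available. First, $\ul u^\ve$ is only lower semicontinuous, so replacing it by $\mu_p^\ve[\ul u^\ve]$ on a ball and leaving it unchanged elsewhere does not yield a continuous function; since $\cS_g\subset C(\ol\Om)$ by definition, the modified object is not an admissible competitor and cannot contradict maximality. Second, even if you manufactured a continuous lift, the operator $\mu_p^\ve$ is nonlocal: altering the function on $B_{r_\ve(x_*)}(x_*)$ changes $\mu_p^\ve$ at every point whose ball meets that region, so the subsolution inequality at those points is not automatic. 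There is no analogue of ``solve locally and glue'' because local solvability is precisely what the theorem is meant to establish.

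The paper sidesteps this entirely by iteration rather than lifting. Starting from $u_1=\ul u^\ve$, it sets $u_{j+1}=\mu_p^\ve[u_j]$; monotonicity of $\mu_p^\ve$ together with the subsolution inequality you correctly derived makes $\{u_j\}$ increasing and bounded, Theorem~\ref{thm:semicontinuity} keeps each $u_j$ in $\lsc(\ol\Om)\cap C(\Om)$, and Lemma~\ref{lem:continuity-projection} passes the fixed-point equation to the pointwise limit $u$. A symmetric decreasing scheme from $\ol u^\ve$ yields an upper semicontinuous fixed point $U\ge u$, and the equality $U=u$ is obtained by a connectedness argument based on the strict monotonicity of Lemma~\ref{lem:strict-monotonicity}. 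Note that one cannot simply invoke Theorem~\ref{th:weak-comparison} here: the semicontinuities are the wrong way round ($u$ is lsc and $U$ is usc, whereas the comparison theorem needs the subsolution usc and the supersolution lsc), so the paper re-runs the comparison argument by hand on $U-u$. Your barrier argument for the boundary values is fine and matches Proposition~\ref{prop:barrier-result1}, and your necessity argument via $g(x)=|x-x_0|$ together with the strong comparison principle is correct; the paper's written proof in fact addresses only the sufficiency direction.
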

\noindent
In Corollary \ref{cor:barrier}, as it happens in the classical case, we will show that all points of $\Ga$ are regular if $\Ga$ satisfies the exterior sphere condition.
\par
If we compare Theorem \ref{thm:existence-uniqueness-harmonious} to similar results obtained in the literature (see \cite{HR2, LPS, MPR}), we can see some differences. Apart for the use of a different notion of $p$-mean, 
which is there given by the explicit definition
$$
\eta^\ve_p[v](x)=\frac{N+2}{N+p}\frac1{|B_\ve(x)|}\int_{B_\ve(x)}v\,dy
+\frac{p-2}{2\,(N+p)}\left[
\max_{
\ol{B_\ve(x)}
} v+\min_{
\ol{B_\ve(x)}
} v
\right]
$$
for $v\in C(\Om)$,
the role of the operator $\mu^\ve_p$ used in \eqref{dirichlet-harmonious} is replaced in \cite{LPS, MPR} by the mapping
$$
C(\Om_\ve)\ni v\mapsto \eta^\ve_p[v]\,\cX_\Om+g^*\,\cX_{\Ga_\ve}, 
$$
where $\Om_\ve=\Om\cup\Ga_\ve$, 
$\Ga_\ve=\{y\in\RR^N\setminus\Om : \dist(y,\Ga)\le\ve\}$ and $g^*$ is a continuous extension of the boundary data $g$ to $\Ga_\ve$. Thus, the solution of the relevant Dirichlet-type problem obtained with these ingredients is in general discontinuous on $\Ga$, differently from what stated in Theorem \ref{thm:existence-uniqueness-harmonious}. On the other hand, the construction of a $p$-harmonious function in those cases is obtained by purely monotonicity arguments, whereas we need the existence of a barrier. 
\par
The final purpose of this paper is to show that the operator $\mu_p^\ve$ 
gives an approximation scheme for the unique viscosity solution of the Dirichlet problem:
\begin{equation}
\label{dirichlet-harmonic}
\Delta_p^G u=0 \ \mbox{ in } \ \Om,\quad u=g \ \mbox{ on } \ \Ga.
\end{equation}
Here, $\De_p^G$ denotes 
{\it the game theoretic (or homogeneous) $p$-Laplacian}, (formally) defined by 
\begin{equation}
\label{game-theoretic-p-laplace}
\Delta^G_p u=\frac{\De u}{p}+\frac{p-2}{p}\,\frac{\lan \na^2u \na u, \na u\ran}{|\na u|^2}.
\end{equation}
For $1\le p<\infty$, we have that 
$$
\De^G_p u=\frac1{p}\,\frac{\dv(|\na u|^{p-2}\na u)}{|\na u|^{p-2}}. 
$$
The normalizing factor $p$ allows a formal definition in the limiting case $p=\infty$. With this choice, $\De^G_2=\De/2$, where $\De$ is the classical Laplace operator, and
$$
\De_\infty u=\frac{\lan \na^2u \na u, \na u\ran}{|\na u|^2}.
$$
\par
We observe that $\De_p^G$ is uniformly elliptic. However, it has discontinuous coefficients and is not variational. Thus, the fulfilment of \eqref{dirichlet-harmonic} must be intended in a suitable viscosity sense, which we will detail in Section \ref{sec:limits-harmonious-functions}. Notice that \textit{game-theoretic $p$-harmonic functions}, i.e. the viscosity solutions of the first equation in \eqref{dirichlet-harmonic}, have been proved to coincide with \textit{$p$-harmonic functions}, i.e. the weak solutions of the \textit{$p$-Laplace equation}, $\De_p u=0$, where
$$
\De_p u=\dv(|\na u|^{p-2}\na u).
$$
(See \cite{JLM}.) With these premises, we can state our convergence result.

\begin{thm}[Limits of $p$-harmonious functions]
\label{th:limit-of-harmonious}
Let $\Om$ be a bounded domain containing a ball of radius $\ve_0>0$. 
Suppose that there exists $\ve_\Ga\in (0, \ve_0]$ such that,
for every $0<\ve<\ve_\Ga$, $\Ga$ is made of regular points for the problem \eqref{dirichlet-harmonious}. 
\par
For $0<\ve<\ve_\Ga$, let $u^\ve$ be the unique solution in $C(\ol{\Om})$ of \eqref{dirichlet-harmonious}. Then, for every $g\in C(\Ga)$, there exists $u\in C(\ol{\Om})$ such that 
$u^\ve$ converges to $u$ uniformly on $\ol{\Om}$ as $\ve\to 0$ and $u$ is the unique viscosity solution of \eqref{dirichlet-harmonic}.
\end{thm}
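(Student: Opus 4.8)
The plan is to follow the standard route for proving that a monotone, consistent approximation scheme converges to the unique viscosity solution of its limiting equation: first extract, by a compactness argument, a subsequence of $\{u^\ve\}$ converging uniformly on $\ol\Om$; then identify every such limit as a viscosity solution of \eqref{dirichlet-harmonic} by means of the asymptotic expansion of the $p$-mean; and finally use uniqueness for \eqref{dirichlet-harmonic} to conclude that the whole family converges. I would first record the uniform bound. Since constant functions are $p$-harmonious and the weak comparison principle for $\mu_p^\ve$ holds (Section \ref{sec:p-means-and-operator}), comparing $u^\ve$ with the constants $\min_\Ga g$ and $\max_\Ga g$ yields $\min_\Ga g\le u^\ve\le \max_\Ga g$ on $\ol\Om$, a bound independent of $\ve$.

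The main obstacle is \emph{equicontinuity}: producing for $\{u^\ve\}$ a modulus of continuity that is independent of $\ve$ and holds up to the boundary. Near $\Ga$ I would exploit the barriers granted by the regularity hypothesis: if $w^{x_0}$ is a barrier at $x_0\in\Ga$, then suitable multiples and vertical translates of $w^{x_0}$ are variationally $p$-superharmonious and dominate $u^\ve$ near $x_0$; by comparison this controls the oscillation of $u^\ve$ about the value $g(x_0)$, and the construction can be carried out uniformly in $\ve$, giving a boundary modulus governed only by the modulus of $g$ and the geometry of $\Ga$. To propagate this control inward I would compare $u^\ve$ with its translates: for a small vector $h$, the function $x\mapsto u^\ve(x+h)$ is variationally $p$-harmonious on the shifted domain, so on the overlap $\Om\cap(\Om-h)$ the weak comparison principle (Section \ref{sec:variationally-harmonious-functions}) bounds $\sup|u^\ve(\cdot)-u^\ve(\cdot+h)|$ by the oscillation of these two functions on $\pa(\Om\cap(\Om-h))$, which lies in a boundary layer where the uniform boundary modulus applies. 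This produces a global modulus of continuity independent of $\ve$. With the uniform bound and this modulus in hand, Arzel\`a--Ascoli furnishes a subsequence $u^{\ve_j}$ converging uniformly on $\ol\Om$ to some $u\in C(\ol\Om)$, and the boundary condition $u=g$ on $\Ga$ passes to the limit.

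To identify $u$ as a viscosity solution of $\De_p^G u=0$ I would invoke the consistency of the scheme, encoded in the asymptotic mean value property of \cite{IMW}: for $\phi\in C^2$ with $\na\phi(x)\ne 0$ one has $\nu_p^r[\phi](x)=\phi(x)+C\,r^2\,\De_p^G\phi(x)+o(r^2)$ as $r\to 0^+$, with $C=C(N,p)>0$; together with monotonicity, these are precisely the structural requirements of \cite{TMP}. Suppose $\phi$ touches $u$ from below at an interior $x_0$ with $\na\phi(x_0)\ne 0$, so that $u-\phi$ has a strict local minimum there. By uniform convergence, $u^{\ve_j}-\phi$ attains a local minimum at some $x_j\to x_0$; for $j$ large $r_{\ve_j}(x_j)=\ve_j$, and the inequality $u^{\ve_j}\ge \phi+\bigl(u^{\ve_j}(x_j)-\phi(x_j)\bigr)$ near $x_j$, combined with the monotonicity and additive-constant invariance of $\nu_p$ and the fixed-point identity $u^{\ve_j}(x_j)=\nu_p^{\ve_j}[u^{\ve_j}](x_j)$, gives $\phi(x_j)\ge \nu_p^{\ve_j}[\phi](x_j)$. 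Inserting the expansion, dividing by $C\,\ve_j^2$, and letting $j\to\infty$ yields $\De_p^G\phi(x_0)\le 0$, the supersolution inequality; the subsolution inequality, obtained by touching $u$ from above, is symmetric. The delicate point is a test function with $\na\phi(x_0)=0$, where the expansion degenerates: here I would rely on the viscosity formulation adopted in Section \ref{sec:limits-harmonious-functions}, which for the singular operator $\De_p^G$ restricts the admissible test functions so that non-constant ones with vanishing gradient need not be tested, while for a locally constant $\phi$ the identity $\nu_p^{\ve_j}[\phi]=\phi$ makes both inequalities automatic.

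Finally, the Dirichlet problem \eqref{dirichlet-harmonic} enjoys a comparison principle and hence admits a unique viscosity solution $u$ (by the theory of $\De_p^G$ recalled in Section \ref{sec:limits-harmonious-functions}, together with \cite{JLM}). Consequently every subsequential uniform limit of $\{u^\ve\}$ equals this same $u$; a routine contradiction argument then upgrades subsequential convergence to uniform convergence of the entire family $\{u^\ve\}$ to $u$ on $\ol\Om$, which completes the proof.
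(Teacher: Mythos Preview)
Your route differs substantially from the paper's, and the equicontinuity step contains a genuine gap.

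The paper does \emph{not} establish equicontinuity of $\{u^\ve\}$ at all. Instead it follows the Barles--Souganidis half-relaxed limit method: it forms
\[
u^*(x)=\limsup_{(\ve,y)\to(0^+,x)} u^\ve(y),\qquad
u_*(x)=\liminf_{(\ve,y)\to(0^+,x)} u^\ve(y),
\]
encodes both the PDE and the boundary condition into a single operator $G$ on $\ol\Om$, proves via the consistency Lemma~\ref{consistency-p} that $u^*$ and $u_*$ are a generalized viscosity subsolution and supersolution of $G=0$, and then invokes the comparison principle (Theorem~\ref{wcp-p-laplace}) to get $u^*\le u_*$. Uniform convergence then follows from a Dini-type argument. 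The point of this machinery is precisely to \emph{bypass} any equicontinuity or Arzel\`a--Ascoli step.

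Your translation-comparison argument for interior equicontinuity does not go through as stated. The operator $\mu_p^\ve$ is built from the radius $r_\ve(x)=\min[\ve,\dist(x,\Ga)]$, which depends on $x$ through the distance to $\Ga$. The shifted function $x\mapsto u^\ve(x+h)$ is $p$-harmonious on $\Om-h$ with respect to the radius $\min[\ve,\dist(x+h,\Ga)]$, not with respect to $r_\ve(x)$. Hence on the overlap $\Om\cap(\Om-h)$ the two functions are solutions of \emph{different} fixed-point equations near the boundary, and the weak comparison principle of Theorem~\ref{th:weak-comparison} (which compares a sub- and a supersolution of the \emph{same} operator) does not apply. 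You can restrict to the smaller set where both radii equal $\ve$, but then you must control $|u^\ve-u^\ve(\cdot+h)|$ on a layer of width $\sim\ve$ around $\Ga$, which throws you back onto boundary estimates.

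That leads to the second gap: your boundary modulus relies on barriers, but the hypothesis of the theorem only asserts that \emph{for each} $\ve\in(0,\ve_\Ga)$ every boundary point is regular. It does not provide a single barrier family $w^{x_0}$ that is $p$-superharmonious simultaneously for all small $\ve$, nor any uniform lower bound on the barriers as $\ve\to 0$. Without such uniformity, the constants $M_\de$ in your barrier comparison may blow up as $\ve\to 0$, and the resulting modulus of continuity need not be $\ve$-independent. (Under the stronger exterior-sphere condition of Corollary~\ref{cor:barrier} the explicit barrier does work for all small $\ve$ at once, but the theorem is stated in greater generality.)

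Your identification of the subsequential limit as a viscosity solution in the interior is essentially correct and matches the consistency part of the paper's argument; the obstruction is solely in producing a uniformly convergent subsequence in the first place.
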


Results similar to this theorem have been obtained in \cite{TMP, LPS, MPR}, based on the $p$-mean $\eta_p^\ve[v].$ In particular, this theorem was expected to hold 
since the average $\nu_p$ enjoys all the structural assumptions, 
proposed in \cite{TMP}, for the convergence of the underlying dinamic programming principle. Those assumptions are additivity with constants, $1$-homogeneity and monotonicity for essentially bounded functions (see Section \ref{subsec:p-mean}).
However, Theorem \ref{th:limit-of-harmonious} cannot be proved as a direct application of the general result contained in \cite{TMP}, since the relevant definitions of $p$-harmonious functions are different.  
Therefore, we adapt the argument used in \cite{TMP} to the case of the operator $\mu_p^\ve$ considered in this paper. We succeed in this goal with the help of an argument established in \cite{BS} and the proof of an asymptotic mean value property for $\mu_p^\ve$, which characterizes game-theoretic $p$-harmonic functions. This is based on the formula
\begin{equation}
\label{p-amvp}
\mu_p^\ve[\phi](x)=\phi(x)+\frac{p}{2(N+p)}\De_p^G\phi(x)\,\ve^2+o(\ve^2) \ \mbox{ as } \ \ve\to 0, 
\end{equation}
which holds for any $x\in\Om$ and any $\phi\in C^2(\Om)$ with $\na\phi(x)\ne 0$.
The formula \eqref{p-amvp} is obtained by adapting \cite[Theorem 3.2]{IMW}. 

\smallskip

As a final remark, we comment on the relationship between 
the barriers for \eqref{dirichlet-harmonious} and those for \eqref{dirichlet-harmonic}. 
By combining Theorem \ref{thm:existence-uniqueness-harmonious} 
with Theorem \ref{th:limit-of-harmonious}, we can observe that 
the existence of a barrier for \eqref{dirichlet-harmonious} for any $\varepsilon$ implicitly 
shows the existence of a barrier also for \eqref{dirichlet-harmonic} 
since the solvability of \eqref{dirichlet-harmonious} or \eqref{dirichlet-harmonic} 
is equivalent to the existence of a barrier in the corresponding sense 
together with the fact that the solvability of \eqref{dirichlet-harmonious} 
for any $\varepsilon$ implies the solvability of \eqref{dirichlet-harmonic}.

We summarize the contents of this paper. In Section \ref{sec:p-means-and-operator}, we present some old and new properties on the mean $\nu_p$ such as 
continuity and monotonicity of the operator $\mu_p^\ve$. In Section \ref{sec:variationally-harmonious-functions}, we introduce what we call variationally $p$-harmonious functions and show that they satisfy  weak and strong comparison principles. 
We shall prove existence and uniqueness for the Dirichlet problem for variationally $p$-harmonious functions in Section \ref{sec:existence-harmonious-functions}. Finally, in Section \ref{sec:limits-harmonious-functions}, we will show that limits of variationally $p$-harmonious functions are $p$-harmonic in a viscosity sense. 

\smallskip

\section{The $p$-mean and the operator $\mu_p^\ve$}
\label{sec:p-means-and-operator}
In this section, we collect some properties of the $p$-means that are instrumental in the proofs of Theorems \ref{thm:existence-uniqueness-harmonious} and \ref{th:limit-of-harmonious}. 

\subsection{Definition of $p$-mean and general properties}
\label{subsec:p-mean}
Let $E\subset\RR^N$ be a measurable set of finite measure. 
For a measurable function $v: E\to\ol{\RR}$, we shall use the notation:
\begin{eqnarray*}
&&\nr v\nr_{p,E}=\left(\frac1{|E|}\int_E |v|^p\,dx\right)^{1/p} \ \mbox{ for } \ 1\le p<\infty 
\ \mbox{ and } \\ 
&&\nr v\nr_{\infty,E}=\inf\{t\ge 0: |\{ x\in E: |v(x)|>t\}|\},
\end{eqnarray*}
whenever these quantities are finite.
Since $E$ has finite measure, we can (and will) always assume that $|E|=1$, without loss of generality.
\par
Let $v\in L^p(E)$. We define a \textit{$p$-mean} of $v$ as a real number $\nu_p[v]$ 
such that 
$$
\|v-\nu_p[v]\|_{p,E}=\min_{\nu\in\RR}\|v-\nu\|_{p,E}.
$$
As shown in \cite{IMW}, $\nu_p[v]$ is uniquely defined for $1<p\le\infty$ and is a natural generalization of the classical mean value of $v$. In fact, $\nu_2[v]$ is the average of $v$ on $E$. We also know that $\nu_1[v]$ may not be unique. For instance, if $E=[-2,2]$ and $v=\cX_{[-1,1]}$, then any number $\nu$ in $[0,1]$ fulfils the definition when $p=1$. 
However, $\nu_1[v]$ is uniquely defined if $v\in L^1(\Om)\cap C(\Om)$, when $\Om$ is an open domain of finite measure in $\RR^N$  (see \cite{No}). For the sake of brevity, from now on, in the case $p=1$ we shall assume that $v\in L^1(\Om)\cap C(\Om)$.
\par
It is useful to know that $\nu_p[v]$ is the unique root of the equation
\begin{equation}
\label{characterization}
\int_E |v-\nu|^{p-2}[v-\nu]\,dy=0,
\end{equation}
for $1\le p<\infty$, and
$$
\nu_\infty[v]=\frac12\left\{\esssup_E v+\essinf_E v\right\}
$$
(see \cite{IMW}).
For $p=1$, we mean that $|t|^{p-2} t=\sgn(t)$, which equals $1$ if $t>0$, $-1$ if $t<0$, and $0$ if $t=0$.  

\begin{rem}
{\rm
(i) It can be proved that $\nu_p[v]\to\nu_\infty[v]$ as $p\to\infty$. This fact can be obtained by taking the limit as $p\to\infty$ in the equality
$$
\left\{\int_{E_p^+}(v-\nu_p[v])^{p-1} dy\right\}^{\frac1{p-1}}=\left\{\int_{E_p^-}(\nu_p[v]-v)^{p-1} dy\right\}^{\frac1{p-1}},
$$
that follows from \eqref{characterization}. Here, $E_p^\pm=\bigl\{y\in E: v\gtreqless \nu_p[v]\bigr\}$.
\par
(ii)
Notice that while $\nu=\nu_p[v]$ is uniquely defined for any function $v$ in $L^p(E)$ with $1<p<\infty$, still, its characterization as solution of \eqref{characterization} should be analyzed in dependence on the values of $p$. In fact, notice that the integral 
$$
\int_E |v-\nu|^{p-2}\,dy
$$
is always finite for $2\le p<\infty$, but may not be so for $1<p<2$.
Thus, while for $2\le p<\infty$ the mean $\nu_p[v]$ can also be characterized as
$$
\nu_p[v]=\frac{\int_E |v-\nu_p[v]|^{p-2} v\,dy}{\int_E |v-\nu_p[v]|^{p-2}\,dy}
$$
(unless $v$ is a.e. constant on $E$), such a characterization may fail to be true for $1<p<2$, unless we extend it by setting $\nu_p[v]=0$, if the denominator is infinite. 
\par
For instance, if $E$ is the unit disk $B$ in $\RR^2$ and $v(x,y)=x^3-y^3$, by uniqueness we have that $\nu_p[v]=0$, since \eqref{characterization} is satisfied by this value for any  $1<p<\infty$. Nevertheless, we have that
$$
\int_B |v|^{p-2}\,dy=\infty \ \mbox{ and } \ \int_B |v|^{p-2} v\,dy=0 
\ \mbox{ for } \ 1<p<\frac43. 
$$
}
\end{rem}

\medskip

Elementary properties of $\nu_p[v]$ are the following (see \cite[Proposition 2.7 and Proposition 2.5]{IMW}): for every function $v\in L^p(E)$, it holds that
\begin{enumerate}[(a)]
\item
(additivity with constants)
$\nu_p[v+c]=\nu_p[v]+c$ for every  $c\in\RR$; 
\item
(homogeneity)
$\nu_p[\al v]=\al \nu_p[v]$ for every $\al\in\RR$;
\item
(monotonicity)
$\nu_p[v_1]\le \nu_p[v_2]$ if $v_1\le v_2$ a.e. in $E$. 
\end{enumerate}

For our purposes in this paper, we need to show that $\nu_p$ is \textit{strictly monotonic}.

\begin{lem}[Strict monotonicity]
\label{lem:strict-monotonicity}
Let $E\subset\RR^N$ be a measurable set with finite measure. 
Let $v_1, v_2\in L^p(E)$, $1\le p\le\infty$, and assume that $v_1\leq v_2$ a.e. in $E$ 
and that the measure of the set
$E^\sharp=\{x\in E :\, v_1(x)<v_2(x)\}$ is positive. 
Then it holds that $\nu_p[v_1]<\nu_p[v_2]$. 
In particular, if $v_1\leq v_2$ a.e. in $E$ 
and $\nu_p[v_1]=\nu_p[v_2]$, we have that $v_1=v_2$ a.e. in $E$. 
\end{lem}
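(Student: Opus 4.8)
The plan is to combine the non-strict monotonicity already recorded in property (c) with a quantitative use of the characterization \eqref{characterization}, so that the only real work is to exclude the equality case. Indeed, by (c) we already have $\nu_p[v_1]\le\nu_p[v_2]$, so it suffices to rule out $\nu_p[v_1]=\nu_p[v_2]$; the final sentence of the statement (the implication forcing $v_1=v_2$ a.e.) then follows at once by contraposition, since $|E^\sharp|>0$ is exactly the negation of $v_1=v_2$ a.e.

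For the principal range $1<p<\infty$ I would argue by contradiction. Suppose $\nu_p[v_1]=\nu_p[v_2]=:\nu^*$. Writing $\psi(t)=|t|^{p-2}t$, which for $p>1$ is continuous and strictly increasing on $\RR$, the characterization \eqref{characterization} gives $\int_E\psi(v_1-\nu^*)\,dy=\int_E\psi(v_2-\nu^*)\,dy=0$. Each integrand lies in $L^1(E)$, because $v_i-\nu^*\in L^p(E)$ forces $|v_i-\nu^*|^{p-1}\in L^{p/(p-1)}(E)\subset L^1(E)$ on the finite-measure set $E$, so the subtraction is legitimate and yields
\[
\int_E\bigl[\,\psi(v_2-\nu^*)-\psi(v_1-\nu^*)\,\bigr]\,dy=0 .
\]
Since $v_1\le v_2$ a.e.\ and $\psi$ is increasing, the integrand is $\ge 0$ a.e.; on $E\setminus E^\sharp$ one has $v_1=v_2$ a.e.\ so it vanishes there, while on $E^\sharp$ we have $v_1-\nu^*<v_2-\nu^*$, and strict monotonicity of $\psi$ makes the integrand strictly positive. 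As $|E^\sharp|>0$, the integral is strictly positive, contradicting the displayed identity. Hence $\nu_p[v_1]<\nu_p[v_2]$.

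For the endpoints $p=\infty$ and $p=1$ I would instead work from the explicit descriptions recalled in the excerpt, namely $\nu_\infty[v]=\tfrac12\{\esssup_E v+\essinf_E v\}$ and, under the standing continuity assumption for $p=1$, the median characterization stemming from $\int_E\sgn(v-\nu)\,dy=0$. I expect these two cases to be the main obstacle, and they should be checked with particular care: a monotone increment $v_2-v_1\ge 0$ supported on a positive-measure subset lying away from the extremal level sets (for $p=\infty$) or the median level (for $p=1$) need not move $\esssup_E$, $\essinf_E$, or the median at all, so strictness cannot be read off from the behaviour of $v$ near those levels alone. The delicate point is therefore to decide whether the global ordering $v_1\le v_2$ together with $|E^\sharp|>0$ genuinely forces the relevant extremal or median quantity to move strictly; I would analyze this separately, keeping in mind that the comparison principles of Section \ref{sec:variationally-harmonious-functions} only invoke strict monotonicity in the range $1<p<\infty$, for which the argument above is complete.
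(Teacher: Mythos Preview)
Your argument for $1<p<\infty$ is correct and is essentially the paper's own proof: both exploit that $t\mapsto|t|^{p-2}t$ is strictly increasing, combined with the characterization \eqref{characterization}. The paper compares the two integrals at an arbitrary level $\nu$ and then reads off the strict inequality between the roots, while you plug in the putative common value $\nu^*$ and derive a contradiction; these are the same computation.

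Your caution about the endpoints is not a gap in your reasoning but a genuine defect in the statement: the lemma is \emph{false} for $p=1$ and for $p=\infty$. Take $E=(0,1)$, $v_1(x)=x$, and $v_2=v_1+\phi$, where $\phi\ge 0$ is a continuous bump supported in $(0.6,0.8)$ with $\max\phi<0.2$. Then $v_1\le v_2$ with $|E^\sharp|>0$, yet $\esssup_E v_2=1$ and $\essinf_E v_2=0$, so $\nu_\infty[v_2]=\tfrac12=\nu_\infty[v_1]$; likewise $\{v_2<\tfrac12\}=(0,\tfrac12)$ and $\{v_2>\tfrac12\}=(\tfrac12,1)$, so \eqref{characterization} with $p=1$ still gives $\nu_1[v_2]=\tfrac12=\nu_1[v_1]$. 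The paper's proof does not escape this: the case $p=\infty$ is dismissed as following ``from an inspection'', and for $1\le p<\infty$ it asserts that $t\mapsto|t-\nu|^{p-2}(t-\nu)$ is strictly increasing, which for $p=1$ reduces to $\sgn(t-\nu)$ and is merely non-decreasing. So your proof is complete precisely on the range where the assertion actually holds, and no argument can fill the remaining cases. (Incidentally, your closing remark that Section~\ref{sec:variationally-harmonious-functions} invokes strict monotonicity only for $1<p<\infty$ is too optimistic: the comparison principles there are stated for all $1\le p\le\infty$ and appeal to this lemma in full.)
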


\begin{proof}For $p=\infty$, our claim follows from an inspection. If $1\le p<\infty$,  the assumptions on $v_1$ and $v_2$ and the fact that 
the function $\RR\ni t\mapsto|t-\nu|^{p-2}(t-\nu)$ is strictly increasing
give that 
$$
\int_{E^\sharp}|v_1-\nu|^{p-2}(v_1-\nu) dy<\int_{E^\sharp}|v_2-\nu|^{p-2}(v_2-\nu) dy 
$$
for every $\nu$. Thus, the same holds with $E^\sharp$ replaced by $E$, and our claims follow from the characterization \eqref{characterization}.
\end{proof}

In \cite[Theorem 2.4]{IMW}, we proved the continuity of the operator $\nu_p$. 
That result can be relaxed.

\begin{lem}[Continuity]
\label{lem:continuity-projection}
Let  $E\subset\RR^N$ be a measurable set with finite measure and fix $1<p<\infty$. Let $v\in L^p(E)$  and  let $\{ v_n\}_{n\in\NN}$ be a sequence of measurable functions that converges to $v$ a.e. in $E$. 
\par
Suppose that either
\begin{enumerate}[(i)]
\item
$\{ v_n\}_{n\in\NN}$ is increasing and $v_n\ge 0$ on $E$ for every $n\in\NN$, or
\item
there exists a function $w\in L^p(E)$ such that $|v_n|\le w$ a.e. in $E$ for every $n\in\NN$.
\end{enumerate}
Then $\nu_p[v_n]\to \nu_p[v]$ as $n\to\infty$.
\end{lem}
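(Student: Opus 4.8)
The plan is to exploit the characterization of $\nu_p[v]$ as the \emph{unique} root of equation \eqref{characterization} for $1<p<\infty$, and to pass to the limit in that equation by a dominated convergence argument. The first step is to observe that in both cases we have a single nonnegative $L^p$ function dominating all the $v_n$. In case (ii) this is $w$ itself (note $w\ge|v_n|\ge 0$). In case (i), since $\{v_n\}$ increases to $v$ with $v_n\ge 0$, we automatically get $0\le v_n\le v$ a.e., so $v$ plays that role; in particular $v_n\in L^p(E)$ and $\nu_p[v_n]$ is well defined in either case. Calling $\phi$ this dominating function and $C:=\nu_p[\phi]$, monotonicity (property (c)) together with $\phi\ge 0$ gives $C\ge\nu_p[0]=0$, and from $-\phi\le v_n\le\phi$ and homogeneity (property (b)) we deduce $-C\le\nu_p[v_n]\le C$. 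Hence $\{\nu_p[v_n]\}_{n}$ is bounded.

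Because the sequence is bounded, it suffices to show that every convergent subsequence has limit $\nu_p[v]$; uniqueness of the root of \eqref{characterization} then forces convergence of the whole sequence. So suppose $\nu_p[v_{n_k}]\to\ell$. For each $k$ we have, by \eqref{characterization},
$$
\int_E |v_{n_k}-\nu_p[v_{n_k}]|^{p-2}\bigl(v_{n_k}-\nu_p[v_{n_k}]\bigr)\,dy=0.
$$
Since $v_{n_k}\to v$ a.e. and $\nu_p[v_{n_k}]\to\ell$, and the map $t\mapsto|t|^{p-2}t$ is continuous for $p>1$, the integrands converge a.e. to $|v-\ell|^{p-2}(v-\ell)$. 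Moreover $|v_{n_k}-\nu_p[v_{n_k}]|\le\phi+C$, so the modulus of the integrand, which equals $|v_{n_k}-\nu_p[v_{n_k}]|^{p-1}$, is dominated by $(\phi+C)^{p-1}$. This dominating function lies in $L^1(E)$: indeed $\phi+C\in L^p(E)$ (as $E$ has finite measure), whence by Hölder $(\phi+C)^{p-1}\in L^{p/(p-1)}(E)\subset L^1(E)$. The dominated convergence theorem then yields $\int_E|v-\ell|^{p-2}(v-\ell)\,dy=0$, so $\ell$ is a root of \eqref{characterization} for $v$, and therefore $\ell=\nu_p[v]$ by uniqueness.

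I expect the only genuine obstacle to lie in the range $1<p<2$, where the factor $|v_n-\nu_p[v_n]|^{p-2}$ is singular on the level sets $\{v_n=\nu_p[v_n]\}$ and, as already noted in the excerpt, need not be integrable on its own. The device that removes this difficulty is to never isolate that singular factor: one keeps the full signed quantity $|v_n-\nu_p[v_n]|^{p-2}(v_n-\nu_p[v_n])=|v_n-\nu_p[v_n]|^{p-1}\,\sgn(v_n-\nu_p[v_n])$, whose modulus is the harmless power $|v_n-\nu_p[v_n]|^{p-1}$, continuous and vanishing where its argument vanishes since $p-1>0$. This makes both the a.e. convergence of the integrand and the $L^1$ domination uniform across all $p\in(1,\infty)$, so that cases (i) and (ii) and every admissible exponent are handled by the single dominated-convergence argument above.
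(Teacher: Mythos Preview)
Your argument is correct and in fact more unified than the paper's. The paper treats the two hypotheses separately: for (ii) it simply invokes Dominated Convergence to get $v_n\to v$ in $L^p(E)$ and then quotes the $L^p$--continuity of $\nu_p$ from \cite[Theorem~2.4]{IMW}; for (i) it exploits the monotonicity of $\nu_n=\nu_p[v_n]$ directly, compares the characterizing integral at $\nu_n$ and at the limit $\ol{\nu}$, and passes to the limit via the Monotone Convergence Theorem (the integrand being bounded from below by the constant $-|\ol{\nu}|^{p-2}\ol{\nu}$). Your route instead absorbs both cases into a single dominated--convergence argument by observing that in case (i) the limit $v$ itself is an $L^p$ majorant of each $v_n$, then works entirely with the signed quantity $|v_n-\nu_p[v_n]|^{p-1}\sgn(\cdot)$ so that no singular factor ever appears. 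The gain is self-containment (you do not need the external continuity result from \cite{IMW}) and a uniform treatment of all $p\in(1,\infty)$; the paper's splitting, on the other hand, makes the role of monotonicity in (i) more transparent and sidesteps the subsequence bookkeeping.
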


\begin{proof}
(i) Set $\nu_n=\nu_p[v_n]$ and $\nu=\nu_p[v]$. The sequence of numbers $\nu_n$ converges or diverges to $+\infty$, since it increases. Let $\ol{\nu}$ be its limit. Since $v_n\le v$, we have that $\nu_n\le\nu$ and hence $\ol{\nu}\le\nu$. Now,  observe that
$$
0=\int_E |v_n-\nu_n|^{p-2}(v_n-\nu_n)\,dy\ge
\int_E |v_n-\ol{\nu}|^{p-2}(v_n-\ol{\nu})\,dy,
$$
since the function $\RR\ni s\mapsto |t-s|^{p-2}(t-s)$ is decreasing and $\nu_n\le\ol{\nu}$. 
The integrand at right-hand side of the last inequality is bounded from below by the number $-|\ol{\nu}|^{p-2} \ol{\nu}$. We thus can apply the Monotone Convergence Theorem and pass to the limit to obtain that
$$
\int_E |v-\nu|^{p-2}(v-\nu)\,dy=0\ge\int_E |v-\ol{\nu}|^{p-2}(v-\ol{\nu})\,dy.
$$
This inequality gives that $\ol{\nu}\ge\nu$, and the proof is complete.
\par
(ii)
The assumptions in (ii) give that $v_n\to v$ in $L^p(E)$ by the Dominated Convergence Theorem. Our claim then follows from \cite[Theorem 2.4]{IMW}.
\end{proof}

\begin{rem}
{\rm
When $p=\infty$, we obtain the convergence $\nu_\infty(v_n)\to\nu_\infty(v)$ as $n\to\infty$, if either $v_n\to v$ uniformly or, in the case (i), if $E$ is a bounded open set and $v_n, v\in C(E)$ for every $n\in\NN$. In fact, in this case the monotonicity of the sequence gives its uniform convergence, by Dini's monotone convergence theorem. \par
When $p=1$, in order to obtain the desired convergence, we must require that $v_n, v\in L^1(\Om)\cap C(\Om)$, where $\Om$ is an open domain of finite measure.
}
\end{rem}

\subsection{The operator $\mu^\ve_p$ and its properties}

Let $\Om\subset\RR^N$ be an open set. Let $B_r(x)$ be a ball of radius $r$, centered at $x$ and contained in $\Om$. We let $\nu^r_p[v](x)$ be the $p$-mean of $v$ relative to the set $E=B_r(x)$. In order to well-define $\nu^r_p[v](x)$, we shall always assume that 
$v\in L^p_\loc(\Om)$ for $1< p\le\infty$ and $v\in C(\Om)\cap L^1_{\loc}(\Om)$ when $p=1$.   
\par
Next, for each fixed $\ve>0$, we set $r_\ve(x)=\min[\ve, \dist(x,\Ga)]$ and define:
$$
\mu^\ve_p[v](x)=\nu^{r_\ve(x)}_p[v](x) \ \mbox{ for every } \ x\in\Om.
$$
We stress the fact that, differently from $\nu^r_p[v](x)$ that can be defined if $x$ is far enough from $\Ga$, $\mu^\ve_p[v](x)$ is well-defined for any $x\in\Om$. Notice that, from \eqref{characterization}, we easily infer that $\nu^r_p[v](x)$ is the unique root $\nu$ of the equation
\begin{equation}
\label{characterization-mu}
\int_B|v(x+r z)-\nu|^{p-2} [v(x+r z)-\nu]\,dz=0
\end{equation}
for $1\leq p<\infty$, and 
\begin{equation}
\label{characterization-mu-infty}
\nu_\infty^r[v](x)=\frac{1}{2}\left(
\operatorname{esssup}_{z\in B}v(x+r z)+
\operatorname{essinf}_{z\in B}v(x+r z)
\right), 
\end{equation}
where $B$ denotes the unit ball in $\RR^N$ centered at the origin. 
These facts tell us that we can assume that $\mu^\ve_p[v]=v$ on $\Ga$, at least formally.  In fact, for $x\in\Ga$,  $r_\ve(x)=0$ and \eqref{characterization-mu}-\eqref{characterization-mu-infty} give that $\nu=v(x)$ when $r=0$. This issue will be discussed in more detail in Theorem \ref{thm:semicontinuity} below.
\par
The following result extends a formula obtained in \cite[Theorem 3.2]{IMW}.
\begin{prop}
\label{uni-amvp}
Let $\Om$ be a domain in $\RR^N$ and let $\phi\in C^2(\Om)$. Then, for every bounded open set $A$ with $\ol{A}\subset\Om$ and such that $\nabla\phi\ne 0$ on $\ol{A}$, it holds that 
$$
\lim_{\ve\to 0}\frac{\mu_p^\ve[\phi]-\phi}{r_\ve^2}
=\frac{p}{2(N+p)}\,\De^G_p \phi \ \mbox{ uniformly on } \ \ol{A}. 
$$
\end{prop}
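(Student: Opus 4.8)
The plan is to reduce the statement to a fixed radius, exploit the defining equation \eqref{characterization-mu} together with a second-order Taylor expansion of $\phi$, and then identify the constant through explicit integrals over $B$. First I would observe that, since $\ol A\subset\Om$ is compact, the number $d_0=\dist(\ol A,\Ga)$ is positive; hence for every $\ve\le d_0$ we have $r_\ve(x)=\ve$ for all $x\in\ol A$, so the quotient to be studied becomes simply $\ve^{-2}\bigl(\mu_p^\ve[\phi](x)-\phi(x)\bigr)$. Writing $a=a(x)=\na\phi(x)$ and $M=M(x)=\na^2\phi(x)$, the hypothesis $\na\phi\ne0$ on the compact set $\ol A$ yields a uniform lower bound $|a(x)|\ge m>0$, while $\phi\in C^2$ gives uniform continuity of $M$ on a compact neighbourhood of $\ol A$; these two facts are what will make every error estimate independent of $x$. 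The pointwise limit is already contained in \cite[Theorem 3.2]{IMW}, so the real content here is the \emph{uniformity}.

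For $1<p<\infty$ I would set $\nu=\phi(x)+\ve^2 s$ and insert the expansion $\phi(x+\ve z)-\phi(x)=\ve\,a\cdot z+\tfrac{\ve^2}{2}\lan Mz,z\ran+R(x,\ve z)$, where $|R(x,\ve z)|\le\ve^2\om(\ve)$ with $\om(\ve)\to0$ uniformly on $\ol A$ by the uniform continuity of $M$. Inserting this into \eqref{characterization-mu} and factoring out $\ve^{p-1}$ (legitimate since $\ve>0$) turns the defining identity into $\int_B|u|^{p-2}u\,dz=0$, where $u(z)=a\cdot z+\ve\,v(z)+o(\ve)$ and $v(z)=\tfrac12\lan Mz,z\ran-s$. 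The zeroth-order integral $\int_B|a\cdot z|^{p-2}(a\cdot z)\,dz$ vanishes by oddness, and the first-order term forces the root to satisfy $\int_B|a\cdot z|^{p-2}\bigl[\tfrac12\lan Mz,z\ran-s\bigr]\,dz=0$, that is $s=c(x)$ with
$$
c(x)=\frac{\int_B|a\cdot z|^{p-2}\,\tfrac12\lan Mz,z\ran\,dz}{\int_B|a\cdot z|^{p-2}\,dz}.
$$
To evaluate $c(x)$ I would use the rotational invariance of $B$ to assume $a=|a|\,e_1$; the off-diagonal terms vanish by symmetry, and slicing perpendicular to $e_1$ reduces each diagonal integral to a one-dimensional Beta integral. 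A computation with Beta and Gamma functions gives the ratios $\int_B|z_1|^{p-2}z_1^2\,dz\big/\int_B|z_1|^{p-2}\,dz=\tfrac{p-1}{N+p}$ and $\int_B|z_1|^{p-2}z_k^2\,dz\big/\int_B|z_1|^{p-2}\,dz=\tfrac{1}{N+p}$ for $k\ge2$, so that, writing $M_{11}=\lan\na^2\phi\,\na\phi,\na\phi\ran/|\na\phi|^2$ and $\tr M=\De\phi$,
$$
c(x)=\frac{1}{2(N+p)}\bigl[(p-1)M_{11}+(\De\phi-M_{11})\bigr]=\frac{p}{2(N+p)}\,\De_p^G\phi(x),
$$
which is exactly the claimed constant.

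To make this rigorous and uniform I would avoid differentiating under the integral sign and instead use the strict monotonicity of the map $s\mapsto J(x,\ve,s):=\int_B|\phi(x+\ve z)-\phi(x)-\ve^2 s|^{p-2}\bigl(\phi(x+\ve z)-\phi(x)-\ve^2 s\bigr)\,dz$, whose integrand is strictly decreasing in $s$. This guarantees a unique root and provides a comparison tool: it suffices to show that, for each $\eta>0$, one has $J(x,\ve,c(x)+\eta)<0<J(x,\ve,c(x)-\eta)$ for all $x\in\ol A$ once $\ve$ is small enough, which pins $s(x,\ve)$ into the interval $(c(x)-\eta,c(x)+\eta)$ uniformly. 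The main obstacle is precisely the justification of the first-order expansion when $1<p<2$: there the weight $|a\cdot z|^{p-2}$ blows up on the hyperplane $a\cdot z=0$ (though it stays integrable, since $p>1$), and $t\mapsto|t|^{p-2}t$ fails to be differentiable at $t=0$, so neither dominated convergence nor a naive Taylor bound applies near that hyperplane. I would handle this by splitting $B$ into a thin slab $\{|a\cdot z|\le\ve^\theta\}$ and its complement: on the complement $|t|^{p-2}t$ is smooth and the expansion is controlled with a remainder made integrable by $|a|\ge m$, while on the slab both terms are small and the slab is thin, so its contribution is negligible after dividing by $\ve$ for $\theta$ in a suitable range (and, for $p$ close to $1$, carrying the expansion to higher order or estimating the slab more sharply). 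The uniform lower bound $|a(x)|\ge m$ and the uniform modulus $\om(\ve)$ render all these estimates independent of $x\in\ol A$, yielding the uniform convergence.

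Finally, the case $p=\infty$ is treated separately through the explicit formula \eqref{characterization-mu-infty}: the $\operatorname{esssup}$ and $\operatorname{essinf}$ of $\phi$ over $B_\ve(x)$ are attained, to leading order, near $x\pm\ve\,\na\phi/|\na\phi|$, and a second-order expansion there produces the coefficient $\tfrac12$ together with the operator $\De_\infty^G$, again uniformly by the uniform continuity of $\na^2\phi$ on $\ol A$. I expect the slab estimate for small $p$ to be the only genuinely delicate point; everything else follows by bookkeeping the uniform moduli established in the first step.
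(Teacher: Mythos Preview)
Your argument is essentially correct, but it takes a very different and much longer route than the paper's proof. The paper dispenses with the whole matter in three lines: it picks, for each $\ve$, a point $x_\ve\in\ol A$ where the absolute error
\[
\left|\frac{\mu_p^\ve[\phi]-\phi}{r_\ve^2}-\frac{p}{2(N+p)}\,\De^G_p\phi\right|
\]
is maximal, extracts (by compactness of $\ol A$) a limit point $x_\ve\to x\in\ol A$, notes that $r_\ve(x_\ve)\to 0$, and then invokes \cite[Theorem 3.2]{IMW} directly. In other words, the paper upgrades the pointwise statement of \cite{IMW} to a uniform one by a standard compactness/subsequence argument, without reopening any of the integral computations. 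Your approach instead reproves the asymptotic expansion from scratch while tracking the uniform moduli (the lower bound $|\na\phi|\ge m$, the uniform continuity of $\na^2\phi$), and then pins down the root of $J(x,\ve,\cdot)$ via a sign/monotonicity comparison; this is self-contained and makes the dependence on $x$ explicit, but at the cost of redoing the Beta-function calculation and the delicate slab estimate for $1<p<2$ that are already absorbed in \cite[Theorem 3.2]{IMW}. The paper's method buys brevity and avoids revisiting the singular-weight analysis; yours buys transparency, since it shows exactly which quantities control the uniformity, and it would survive even if one did not have \cite{IMW} at hand. One small omission: you treat $1<p<\infty$ and $p=\infty$ but do not say how the case $p=1$ (the median) fits in; the paper's compactness argument covers all $p$ at once because \cite[Theorem 3.2]{IMW} does.
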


\begin{proof}
Let $x_\ve$ a point in $\ol{A}$ maximizing the difference 
$$
\left|\frac{\mu_p^\ve[\phi]-\phi}{r_\ve^2}
-\frac{p}{2(N+p)}\,\De^G_p \phi\right|
$$
on $\ol{A}$. Up to a subsequence, we have that $x_\ve\to x$ for some $x\in\ol{A}$, and hence $r_\ve(x_\ve)\to 0$ as $\ve\to 0$. The conclusion then follows from \cite[Theorem 3.2]{IMW}. 
\end{proof}

The following claim is a straightforward consequence of Lemma \ref{lem:strict-monotonicity}.

\begin{cor}[Monotonicity]
\label{cor:mean-bound}
Let $\Om\subset\RR^N$ be a domain of finite measure.  Let $v\in L^p(\Om)$, $1\le p<\infty$, $v\in L^1(\Om)\cap C(\Om)$ for $p=1$. 
Let $v_1, v_2\in L^p(\Om)$ be such that $v_1\le v_2$ a.e. in $\Om$. Then $\mu_p^\ve[v_1]\le\mu_p^\ve[v_2]$ on $\ol{\Om}$. Moreover, we have that $\mu_p^\ve[v_1](x)<\mu_p^\ve[v_2](x)$ whenever the set 
$$
\{y\in B_{r_\ve(x)}(x): v_1(y)<v_2(y)\}
$$ 
has positive measure.
\end{cor}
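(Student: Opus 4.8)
The plan is to argue pointwise in $x\in\ol{\Om}$, transferring the statement to the single-set operator $\nu_p$ on the ball $E=B_{r_\ve(x)}(x)$, where Lemma \ref{lem:strict-monotonicity} applies verbatim. So the whole corollary should reduce to a localization plus a case split, with no analytic difficulty beyond checking that the hypotheses survive restriction to a ball.

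First, I would fix an interior point $x\in\Om$. Then $\dist(x,\Ga)>0$, so $r_\ve(x)=\min[\ve,\dist(x,\Ga)]>0$, and the open ball $E:=B_{r_\ve(x)}(x)$ is contained in $\Om$ and has finite measure; moreover the restrictions $v_i|_E$ lie in $L^p(E)$ because $v_i\in L^p(\Om)$ and $E\subset\Om$ (and, when $p=1$, the functions are continuous on $E$, so $\nu_1$ remains well-defined there by the convention adopted in Section \ref{subsec:p-mean}). By the very definition of the operator, $\mu_p^\ve[v_i](x)=\nu_p^{r_\ve(x)}[v_i](x)=\nu_p[v_i|_E]$ for $i=1,2$. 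Since $v_1\le v_2$ a.e.\ in $\Om$, the same ordering holds a.e.\ in $E\subset\Om$, so Lemma \ref{lem:strict-monotonicity} is applicable with this choice of $E$.

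Then I would split into two cases. If the set $E^\sharp=\{y\in E:\ v_1(y)<v_2(y)\}$ has positive measure, Lemma \ref{lem:strict-monotonicity} yields $\nu_p[v_1|_E]<\nu_p[v_2|_E]$, which is exactly the strict inequality $\mu_p^\ve[v_1](x)<\mu_p^\ve[v_2](x)$; this is precisely the ball appearing in the statement, since $E=B_{r_\ve(x)}(x)$. If instead $E^\sharp$ is null, then $v_1=v_2$ a.e.\ in $E$ and hence $\nu_p[v_1|_E]=\nu_p[v_2|_E]$. In both cases $\mu_p^\ve[v_1](x)\le\mu_p^\ve[v_2](x)$, so the non-strict bound holds at every interior point (alternatively, this follows at once from the elementary monotonicity property (c) of $\nu_p$). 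For a boundary point $x\in\Ga$ one has $r_\ve(x)=0$, and by the convention following \eqref{characterization-mu-infty} the operator reduces to evaluation, $\mu_p^\ve[v_i](x)=v_i(x)$, so the non-strict inequality extends to all of $\ol{\Om}$. I do not expect a genuine obstacle: the result is bookkeeping on top of Lemma \ref{lem:strict-monotonicity}, and the only delicate points requiring a word of care are the well-posedness of $v_i|_E$ on each ball (integrability inherited from $\Om$, continuity for $p=1$) and the observation that both the a.e.\ ordering and the positivity of the measure of $E^\sharp$ are local conditions that pass to the subset $E$.
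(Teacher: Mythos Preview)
Your proof is correct and is exactly the unpacking the paper has in mind: the paper offers no separate proof, stating only that the corollary is a straightforward consequence of Lemma~\ref{lem:strict-monotonicity}, which is precisely your pointwise localization to $E=B_{r_\ve(x)}(x)$ followed by an appeal to that lemma.
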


Next, we show that $\mu_p^\ve$ acts quite naturally on $\usc(\ol{\Om})$ and $\lsc(\ol{\Om})$,
the classes of functions that are upper and lower semicontinuous on $\ol{\Om}$, respectively. We state our result for the case of lower semicontinuous functions. The remaining case will easily follow.

\begin{thm}[Invariance semicontinuity property]
\label{thm:semicontinuity}
Let $\Om\subset\RR^N$ be a bounded open set. 
If $v\in\lsc(\ol{\Om})$, then we have that $\mu^\ve_p[v]\in\lsc(\ol{\Om})\cap  C(\Om)$.
\end{thm}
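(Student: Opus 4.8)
The plan is to show two things separately: first, that $\mu_p^\ve[v] \in C(\Om)$ whenever $v \in \lsc(\ol{\Om})$ (in fact lower semicontinuity of $v$ alone, together with local boundedness, should suffice for interior continuity); and second, that $\mu_p^\ve[v]$ is lower semicontinuous up to the boundary $\Ga$. For the interior continuity, I would fix $x_0 \in \Om$ and a sequence $x_n \to x_0$. The radius function $r_\ve(x) = \min[\ve,\dist(x,\Ga)]$ is continuous (indeed Lipschitz) in $x$, so $r_\ve(x_n) \to r_\ve(x_0) =: r_0$. The key device is the scaling in \eqref{characterization-mu}: writing the defining equation over the fixed unit ball $B$ via the substitution $y = x + r_\ve(x)z$, the $p$-mean $\mu_p^\ve[v](x)$ becomes the $p$-mean over $B$ of the function $z \mapsto v(x + r_\ve(x)z)$. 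As $x_n \to x_0$, the functions $w_n(z) := v(x_n + r_\ve(x_n)z)$ converge to $w_0(z) := v(x_0 + r_0 z)$ — at least pointwise at continuity points of $v$, and more carefully using lower semicontinuity. I would then invoke the continuity Lemma \ref{lem:continuity-projection}(ii): since $\ol{B_{r_\ve(x_0)+\delta}(x_0)} \subset \Om$ is compact and $v$ is bounded there (an lsc function is bounded below on a compact set, and one argues boundedness above as well on the relevant neighborhood, or restricts to the essentially bounded regime), the sequence $w_n$ admits a common $L^p(B)$-dominating function, giving $\nu_p[w_n] \to \nu_p[w_0]$, i.e. $\mu_p^\ve[v](x_n) \to \mu_p^\ve[v](x_0)$.

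The subtlety in the interior step is the case where $r_0 = 0$, which occurs only if $\dist(x_0,\Ga) = 0$, i.e. $x_0 \in \Ga$; so for genuinely interior points $x_0$ we have $r_0 > 0$ and the balls $B_{r_\ve(x_n)}(x_n)$ stabilize to a nondegenerate ball, and the dominated-convergence hypothesis of Lemma \ref{lem:continuity-projection}(ii) is readily verified. The convergence $w_n \to w_0$ a.e. must be handled with care because $v$ is only lsc, not continuous; the cleanest route is to note that the graphs of the rescaled functions converge and apply the stability of the $p$-mean under the relevant mode of convergence, or alternatively to sandwich $v$ between continuous functions from below and pass to the limit using the monotonicity from Corollary \ref{cor:mean-bound} together with Lemma \ref{lem:continuity-projection}(i). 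I expect this to be the main technical obstacle: reconciling mere lower semicontinuity of $v$ with the a.e.-convergence needed to apply the continuity lemma, and in particular controlling the behavior when the center moves and the radius changes simultaneously.

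For lower semicontinuity on all of $\ol{\Om}$, including boundary points, the new feature is points $x_0 \in \Ga$, where $r_\ve(x_0) = 0$ and, by the discussion following \eqref{characterization-mu-infty}, $\mu_p^\ve[v](x_0) = v(x_0)$. I would fix $x_0 \in \Ga$ and a sequence $x_n \to x_0$ (with $x_n$ possibly interior), and aim to show $\liminf_{n} \mu_p^\ve[v](x_n) \ge v(x_0)$. The mechanism is that as $x_n \to x_0 \in \Ga$ the radius $r_\ve(x_n) \to 0$, so the averaging balls shrink to the point $x_0$; by monotonicity (Corollary \ref{cor:mean-bound}) and additivity with constants (property (a) of $\nu_p$), the $p$-mean over a small ball $B_{r_\ve(x_n)}(x_n)$ is bounded below by $\essinf$ of $v$ over that ball, and lower semicontinuity of $v$ forces this essential infimum to have $\liminf$ at least $v(x_0)$. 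Concretely, for any $\delta > 0$ there is a neighborhood of $x_0$ on which $v > v(x_0) - \delta$ (by lsc), and for $n$ large the ball $B_{r_\ve(x_n)}(x_n)$ lies in this neighborhood, so $\mu_p^\ve[v](x_n) \ge v(x_0) - \delta$ by monotonicity; letting $\delta \to 0$ gives the claim. Combining the interior continuity with this boundary lower-semicontinuity estimate yields $\mu_p^\ve[v] \in \lsc(\ol{\Om}) \cap C(\Om)$, as asserted.
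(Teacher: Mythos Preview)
Your overall two-part structure (interior continuity, then boundary lower semicontinuity) matches the paper's, and your boundary argument is actually \emph{simpler} than the paper's: you use directly that, by lower semicontinuity, $v > v(x_0)-\de$ on a neighborhood of $x_0\in\Ga$, so the shrinking balls $B_{r_\ve(x_n)}(x_n)$ eventually lie in that neighborhood and monotonicity of $\mu_p^\ve$ gives $\mu_p^\ve[v](x_n)\ge v(x_0)-\de$. The paper instead subtracts the minimum of $v$ to reduce to $w^{x_n}\ge 0$, forms $\inf_{k\ge n}w^{x_k}$, and applies the monotone-convergence case Lemma~\ref{lem:continuity-projection}(i) to $\liminf_n w^{x_n}$. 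Both work; yours is more elementary.

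Where your proposal is genuinely incomplete is the interior step. You plan to invoke Lemma~\ref{lem:continuity-projection}(ii), which requires a.e.\ convergence $w_n\to w_0$, and you correctly flag that this is not clear for a merely lower semicontinuous $v$; your suggested fixes (``graphs converge'', sandwiching by continuous functions from below) are too vague---in particular, approximating $v$ from below by continuous functions and using monotonicity would only yield lower semicontinuity of $\mu_p^\ve[v]$ in $\Om$, not full continuity. The paper sidesteps the a.e.\ issue entirely: since $v\in L^p_{\loc}(\Om)$ and the map $(x,r)\mapsto v(x+r\,\cdot)$ is continuous from $\Om\times(0,\infty)$ into $L^p(B)$ (continuity of translation and dilation in $L^p$), one gets $w^{x_n}\to w^x$ in $L^p(B)$ directly, and then applies the $L^p$-continuity of $\nu_p$ from \cite[Theorem~2.4]{IMW} rather than Lemma~\ref{lem:continuity-projection}(ii). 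That is the missing idea in your interior argument.
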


\begin{proof}
We first prove that $\mu^\ve_p[v]\in C(\Om)$. We take a point $x\in\Om$ and any sequence of points $x_n\in\Om$ converging to $x$. Then, we observe that 
$$
\mu_p^\ve[v](x)=\mu_p^1[w^x](0), 
$$
where $w^x : B\to\RR$ is defined by 
\begin{equation}
\label{def-w-x}
w^x(z)=v(x+r_\ve(x)z)\ \text{ for } \ z\in B. 
\end{equation}
In other words, $\mu_p^\ve[v](x)=\nu_p[w^x]$, when we choose $E$ to be the unit ball $B$. 
\par
Being as $v\in L^p_\loc(\Om)$ and $r_\ve$ continuous, we infer that $w^{x_n}\to w^x$ in $L^p(B)$ as $n\to\infty$. Thus, \cite[Theorem 2.4]{IMW} gives that $\nu_p[w^{x_n}]\to \nu_p[w^x]$, and hence we conclude that
$\mu_p^\ve[v](x_n)\to \mu_p^\ve[v](x)$. 
\par
We are left to prove the desired semicontinuity at points in $\Ga$. Thus, we take any sequence of points $x_n\in\ol{\Om}$ converging to some point $x\in\Ga$.
Observe that, since $\ol{\Om}$ is compact, $v$ admits its minimum $m>-\infty$ on it. Next, we proceed as before and consider the function $w^{x_n}$ in \eqref{def-w-x}. This is well-defined at points $x_n\in\Om$. At points $x_n\in\Ga$, instead, we consistently mean that $w^{x_n}=v(x_n)$. Since $\nu_p[w^{x_n}]=\nu_p[w^{x_n}-m]+m$ and $w^{x_n}\ge m$, we can always suppose that $w^{x_n}\ge 0$.
By the semicontinuity of $v$ and the continuity of $r_\ve$, we know that 
$$
\liminf_{n\to\infty} w^{x_n}(z)\ge w^x(z)
\ \mbox{ for any } \ z\in B. 
$$
Now, we have that 
$$
\inf_{k\geq n} \left(w^{x_k}\right)\to \liminf_{n\to\infty}\left(w^{x_n}\right)\ \mbox{ on $B$ as } \ n\to\infty,
$$
and the convergence is monotone increasing. 
Thus, we can apply Lemma \ref{lem:continuity-projection} (i) to obtain that 
$$
\lim_{n\to\infty} \nu_p\!\left[\inf_{k\geq n} \left(w^{x_k}\right)\right]= \nu_p\!\left[\liminf_{n\to\infty}\left(w^{x_n}\right)\right].
$$
Notice that this holds even if there are infinitely many terms $x_n$ that belong to $\Ga$.
\par
Therefore, we conclude that
\begin{multline*}
\liminf_{n\to\infty} \mu_p^\ve[v](x_n)=\liminf_{n\to\infty} \nu_p[w^{x_n}]\ge 
\liminf_{n\to\infty} \nu_p\!\left[\inf_{k\geq n} \left(w^{x_k}\right)\right]= \\
\nu_p\!\left[\liminf_{n\to\infty}\left(w^{x_n}\right)\right]\ge \nu_p[w^x]=\mu_p^\ve[v](x).
\end{multline*}
This gives that $\mu_p^\ve[v]$ is lower semicontinuous on $\ol{\Om}$.
\end{proof}

\smallskip

\section{Variationally $p$-harmonious functions}
\label{sec:variationally-harmonious-functions}
Slightly differently from what done in \cite{TMP, MPR}, we give the following definitions. 
Let $\Om\subset\RR^N$ be a bounded domain. Set $1\le p\le\infty$ and fix $\ve>0$ such that $\Om$ contains at least one ball of radius $\ve$. We say that $v\in L^p(\Om)$ is a {\it variationally $p$-subharmonious (or variationally $p$-superharmonious)} function in $\Om$ if
$$
v(x)\le\mu^\ve_p[v](x) \quad \mbox{(or $v(x)\ge\mu^\ve_p[v](x)$) \ for almost every  \ $x\in \Om$.}
$$ 
A \textit{variationally $p$-harmonious} function is both variationally $p$-subharmonious and $p$-superharmonious. We stress the fact that the term ``variationally'' refers to the variational definition of the operator $\mu^\ve_p$.

\begin{ex}
\label{ex:examples}
{\rm
(i) It is easily seen that constant functions are variationally $p$-harmonious for any $p\in [1,\infty]$. 
\par
(ii) For $\xi\in\RR^N$, let $a(y)=\lan\xi, y\ran+c$ be an affine function. We have that $a(y)=a_0(y)+\lan\xi, x\ran+c$, where $a_0(y)=\lan\xi, y-x\ran$ and $\lan\xi, x\ran+c$ is constant. It is easy to infer that  $\nu_p^r[a_0](x)=0$, by the central symmetry of $a_0$ with respect to $x$. Thus, we conclude that 
$$
\nu_p^r[a](x)=\nu_p^r[a_0](x)+\nu_p^r[\lan\xi, x\ran+c](x)= \lan\xi, x\ran+c=a(x).
$$
It is then clear that $\mu_p^\ve[a]=a$.
\par
Therefore, we infer that affine functions are always variationally $p$-harmonious. It is then evident that convex and concave functions are variationally $p$-subharmonious and  $p$-superharmonious, respectively.
\par
(iii) Fix an $\al>0$ and set $\ga_\al(y)=|y|^{-\al}$ for $y\ne 0$. Clearly, we have that $\ga_\al\in L^p_\loc(\RR^N)$ if $\al p<N$ and $\ga_\al\in L^p_\loc(\RR^N\setminus\{0\})$ for any $p\in[1,\infty]$. 
\par
Next, let 
$$
\De_p^G \phi=\frac1{p}\,\left\{\De \phi+(p-2)\,\frac{\lan\na^2 \phi \na \phi, \na \phi\ran}{|\na \phi|^2}\right\}
$$
denote the \textit{game-theoretic $p$-laplacian} of a smooth function away from its critical points.
We easily compute that
$$
\De^G_p\ga_\al(y)=\frac{\al\,[\al (p-1)+p-N]}{p}\,|y|^{-(\al+2)} \ \mbox{ for } \ y\ne 0.
$$
\par
By applying \cite[formula (3.6) in Theorem 3.2]{IMW},  for any $x\ne 0$, we obtain that
$$
\frac{\ga_\al(x)-\nu^r_p[\ga_\al](x)}{r^2}=-\frac{\al[\al (p-1)+p-N]}{2(N+p)}\,|x|^{-(\al+2)} +o(1)
$$
as $r\to 0$. 
\par
Now, let  $\Om$ be a bounded open set such that $\ol{\Om}$ does not contain the origin.
Depending on the sign of $\al (p-1)+p-N$, there exists $r_\Om>0$ such that the inequalities $\ga_\al\ge\nu_p^r[\ga_\al]$ or $\ga_\al\le\nu^r_p[\ga_\al]$ are uniformly satisfied in $\ol{\Om}$ for $0<r<r_\Om$. 
Therefore, for $0<r<r_\Om$,  $\ga_\al$ is variationally $p$-subharmonious in $\Om$, if 
$$
p> \frac{\al+N}{\al+1},
$$
and variationally $p$-superharmonious in $\Om$, if 
$$
p< \frac{\al+N}{\al+1}.
$$
}
\end{ex}

\medskip

\subsection{Comparison principles}
Despite they are not solutions of any partial differential equation, variationally $p$-harmonious functions satisfy comparison principles.

\begin{thm}[Weak comparison principle]
\label{th:weak-comparison}
Let $\Om\subset\RR^N$ be a bounded open set. Let $v\in\usc(\ol{\Om})$ and $w\in\lsc(\ol{\Om})$ be $p$-subharmonious and $p$-superharmonious in $\Om$, respectively.
\par
If $v\le w$ on $\Ga$, then $v\le w$ on $\ol{\Om}$.
\end{thm}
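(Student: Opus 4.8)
The plan is to argue by contradiction, reducing the whole statement to the strict monotonicity of $\mu_p^\ve$ (Corollary~\ref{cor:mean-bound}, or directly Lemma~\ref{lem:strict-monotonicity}) combined with a propagation argument that uses the connectedness of $\Om$ and the semicontinuity of the functions. First I would set $h=v-w$. Since $v\in\usc(\ol{\Om})$ and $-w\in\usc(\ol{\Om})$, the function $h$ is upper semicontinuous on the compact set $\ol{\Om}$, hence it attains its maximum $M=\max_{\ol{\Om}}h$. Suppose, for contradiction, that $M>0$. Because $v\le w$ on $\Ga$, we have $h\le 0$ on $\Ga$, so the coincidence set $K=\{x\in\ol{\Om}:h(x)=M\}$ — which is nonempty and, being a superlevel set of the USC function $h$, closed in $\ol{\Om}$ — is contained in $\Om$.

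Next I would establish a rigidity identity at an arbitrary point $x_0\in K$. Since $x_0\in\Om$, we have $r:=r_\ve(x_0)>0$ and $B_r(x_0)\subset\Om$, and on this ball $v\le w+M$ everywhere. Using that $v$ is $p$-subharmonious and $w$ is $p$-superharmonious at $x_0$, together with the monotonicity and additivity-with-constants of $\nu_p$ (properties (a) and (c) and Corollary~\ref{cor:mean-bound}), I would write the chain
\begin{equation*}
M=v(x_0)-w(x_0)\le \mu_p^\ve[v](x_0)-\mu_p^\ve[w](x_0)=\nu_p^{r}[v](x_0)-\nu_p^{r}[w](x_0)\le \nu_p^{r}[w+M](x_0)-\nu_p^{r}[w](x_0)=M .
\end{equation*}
Equality must therefore hold throughout; in particular $\nu_p^{r}[v](x_0)=\nu_p^{r}[w+M](x_0)$, while $v\le w+M$ a.e.\ on $B_r(x_0)$.

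The propagation step is then the heart of the argument. By the \emph{strict} monotonicity in Corollary~\ref{cor:mean-bound}, the identity $\nu_p^{r}[v](x_0)=\nu_p^{r}[w+M](x_0)$ forces the set $\{y\in B_r(x_0):v(y)<w(y)+M\}$ to have zero measure, i.e.\ $h=M$ a.e.\ on $B_r(x_0)$. I would upgrade this to a pointwise identity using upper semicontinuity: since $h\le M$ everywhere and $h=M$ on a full-measure (hence dense) subset of the open ball, for any $y\in B_r(x_0)$ one can pick $y_n\to y$ with $h(y_n)=M$, and USC gives $h(y)\ge\limsup_n h(y_n)=M$, so $h\equiv M$ on $B_r(x_0)$. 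Thus every $x_0\in K$ has a whole ball $B_{r_\ve(x_0)}(x_0)\subset K$, so $K$ is open in $\Om$; being also closed and nonempty, and $\Om$ connected, we conclude $K=\Om$. Finally, taking $x_n\in\Om$ with $x_n\to x^\ast\in\Ga$, upper semicontinuity yields $M=\limsup_n h(x_n)\le h(x^\ast)\le 0$, contradicting $M>0$; hence $M\le 0$, i.e.\ $v\le w$ on $\ol{\Om}$.

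The step I expect to be the main obstacle is precisely this propagation: passing from the \emph{a.e.}\ identity delivered by strict monotonicity to a genuinely pointwise one, and then spreading it across $\Om$. Both rely essentially on the one-sided regularity ($h$ is USC and bounded above by $M$), which is exactly what makes the a.e.-to-everywhere upgrade and the open–closed dichotomy work; in particular the defining inequalities $v\le\mu_p^\ve[v]$ and $w\ge\mu_p^\ve[w]$ must be invoked pointwise at the maximizer $x_0$, consistent with the sub-/supersolution reading used in the Perron framework. The remaining ingredients (existence of the maximum, confinement of $K$ to $\Om$, and the boundary contradiction) are routine consequences of semicontinuity.
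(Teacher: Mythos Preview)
Your argument is correct and follows essentially the same route as the paper: assume $M=\max_{\ol{\Om}}(v-w)>0$, show that the coincidence set is both closed and open via the chain of inequalities at a maximizer and the strict monotonicity of $\nu_p$, then use connectedness to propagate to the whole set and derive a contradiction at the boundary. Two small remarks. First, the theorem is stated for a bounded \emph{open set}, not a domain; you invoke ``$\Om$ connected'' to conclude $K=\Om$, which is not granted. The paper handles this by passing to the connected component $\Om'$ of $\Om$ containing a maximizer and arguing there (note $\partial\Om'\subset\Ga$), a one-line fix for your proof. Second, your explicit upgrade from the a.e.\ identity $h=M$ on $B_{r_\ve(x_0)}(x_0)$ to a pointwise one via the USC of $h$ is a welcome clarification: the paper writes ``$w\equiv v-M$ on $B_{r_\ve(x)}(x)$'' directly from Lemma~\ref{lem:strict-monotonicity}, but that lemma only yields equality a.e., and the density-plus-USC step you spell out is exactly what is needed.
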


\begin{proof}
We proceed by contradiction. We have that $v-w\in\usc(\ol{\Om})$, and hence it attains its maximum $M$ on $\ol{\Om}$.  Suppose that $M$
is positive.  Since $v\le w$ on $\Ga$, the set 
$A=\{x\in\Om : v(x)-w(x)=M\}$ contains at least one point $x_0$. Let $\Om'$ be the connected component of $\Om$ that contains $x_0$. The set $A'=\{x\in\Om' : v(x)-w(x)=M\}$ is thus non-empty and also closed in $\Om'$, being the pre-image of the closed set $\{M\}\subset\RR$ under the continuous function $v-w:\Om'\to\RR$.
\par
To show that $A'$ is open, we take any $x\in A'$ and observe that 
$$
\mu_p^\ve[v-M](x)\le\mu_p^\ve[w](x)\le w(x)
$$
since $v-M\le w$ in $\Om$ by construction and $w$ is $p$-superharmonious. Thus, we infer that
$$
w(x)\ge\mu_p^\ve[v-M](x)=\mu_p^\ve[v](x)-M\ge v(x)-M=w(x) 
$$
since $v$ is $p$-subharmonious.
In particular, $\mu_p^\ve[w](x)=\mu_p^\ve[v-M](x)$, 
which gives that $w\equiv v-M$ on $B_{r_\ve(x)}(x)$, by Lemma \ref{lem:strict-monotonicity}. In other words, 
we have that $B_{r_\ve(x)}(x)$ is contained in $A'$. Thus, we have proved that $A'$ is also open and, since $\Om'$ is connected, we have that $A'=\Om'$. Thus, $v-w\equiv M$ on $\Om'$ and, by continuity, we infer that $M\equiv v-w$ on the boundary of $\Om'$, which is contained in $\Ga$. Therefore, we reach the contradiction $0<M\leq 0$.
\end{proof}

By more or less the same arguments, we can prove the following strong comparison principle.

\begin{cor}[Strong comparison principle]
Let $\Om\subset\RR^N$ be a bounded domain. Let $v, w\in C(\Om)$ be $p$-subharmonious and $p$-superharmonious in $\Om$, respectively.
\par
Suppose that $v\le w$ in $\Om$. Then, either $v<w$ in $\Om$ or, else, $v\equiv w$ in $\Om$.
\end{cor}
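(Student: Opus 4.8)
The plan is to argue by connectedness, exactly as in the proof of Theorem~\ref{th:weak-comparison}, but working this time with the coincidence set rather than with the set on which a positive maximum is attained. Since $v\le w$ in $\Om$, the two alternatives in the statement are governed by the set
$$
A=\{x\in\Om:\, v(x)=w(x)\}.
$$
First I would note that $A$ is closed in $\Om$: indeed $v-w\in C(\Om)$ and $A=(v-w)^{-1}(\{0\})$ is the preimage of a closed set. It then suffices to prove that $A$ is \emph{open}, for then the connectedness of the domain $\Om$ forces either $A=\emptyset$ (whence $v<w$ throughout $\Om$, using $v\le w$) or $A=\Om$ (whence $v\equiv w$), which is precisely the dichotomy claimed.

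The heart of the matter is the openness of $A$. Given $x\in A$, I would exploit the monotonicity and strict monotonicity of the $p$-mean on the ball $E=B_{r_\ve(x)}(x)\subset\Om$. Since $v\le w$ in $\Om$, Corollary~\ref{cor:mean-bound} (equivalently, property (c) of $\nu_p$ applied on $E$) gives $\mu_p^\ve[v](x)\le\mu_p^\ve[w](x)$; combining this with the subharmoniousness of $v$, the superharmoniousness of $w$, and the equality $v(x)=w(x)$ yields the chain
$$
v(x)\le\mu_p^\ve[v](x)\le\mu_p^\ve[w](x)\le w(x)=v(x).
$$
Hence every inequality is an equality, and in particular $\mu_p^\ve[v](x)=\mu_p^\ve[w](x)$. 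At this point the strict monotonicity of Lemma~\ref{lem:strict-monotonicity}, applied on $E$, rules out that the set $\{y\in B_{r_\ve(x)}(x):\, v(y)<w(y)\}$ has positive measure; since $v\le w$ and both functions are continuous, this forces $v\equiv w$ on the whole ball $B_{r_\ve(x)}(x)$. Therefore $B_{r_\ve(x)}(x)\subset A$, which establishes that $A$ is open.

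I expect the only genuinely delicate point to be this openness step, and more precisely the passage from ``$v=w$ almost everywhere on $B_{r_\ve(x)}(x)$'' to ``$v=w$ everywhere on $B_{r_\ve(x)}(x)$'': here the hypothesis $v,w\in C(\Om)$ is essential, because if $v(y)<w(y)$ held at a single interior point $y$, continuity would propagate the strict inequality to a neighborhood of positive measure, contradicting the conclusion extracted from Lemma~\ref{lem:strict-monotonicity}. Everything else — the closedness of $A$ and the final connectedness dichotomy — is routine, and the overall structure of the argument is identical to that of Theorem~\ref{th:weak-comparison}, which is exactly what the phrase ``by more or less the same arguments'' anticipates.
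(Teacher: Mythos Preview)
Your argument is correct and follows essentially the same route as the paper: you work with the coincidence set $A=\{x\in\Om:\,v(x)=w(x)\}$, show it is closed by continuity and open via the chain $v(x)\le\mu_p^\ve[v](x)\le\mu_p^\ve[w](x)\le w(x)=v(x)$ combined with Lemma~\ref{lem:strict-monotonicity}, and then invoke connectedness---which is exactly the paper's proof (it simply says ``repeat the argument of Theorem~\ref{th:weak-comparison} with $M=0$''). Your added remark on why continuity upgrades the a.e.\ equality on $B_{r_\ve(x)}(x)$ to pointwise equality is a welcome clarification that the paper leaves implicit.
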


\begin{proof}
Suppose that $v-w=0$ at some point in $\Om$. Then, we repeat the same argument in  the proof of Theorem \ref{th:weak-comparison} with $A'=\{ x\in\Om: v(x)-w(x)=0\}$ and $M=0$. We thus infer that $A'$ is non-empty, closed, and open. Since $\Om$ is connected, we then obtain that $A'=\Om$, and hence $v\equiv w$ in $\Om$.
\end{proof}

\smallskip

\section{Existence of $p$-harmonious functions}
\label{sec:existence-harmonious-functions}
\noindent

In this section, we shall prove Theorem \ref{thm:existence-uniqueness-harmonious}. We start with some definitions.
\par
Let $\Om$ be a bounded domain in $\RR^N$ with boundary $\Ga$ and let $g:\Ga\to\RR$ be a continuous function. For a fixed number $\ve>0$, we introduce two classes of functions:
\begin{eqnarray*}
&&\cS_g=\{v\in C(\ol{\Om}) :\,v\le\mu_p^\ve[v] \ \mbox{ in }\ \Om,
\ v\le g \ \mbox{ on } \ \Ga\}, \\
&&\cS^g=\{v\in C(\ol{\Om}) :\,v\ge\mu_p^\ve[v] \ \mbox{ in }\ \Om,
\ v\ge g \ \mbox{ on } \ \Ga\}. 
\end{eqnarray*}
It is clear that $\cS^g=-\cS_{-g}$. The elements of $\cS_g$ and $\cS^g$ will be called \textit{variational $p$-subsolutions} and, respectively, \textit{$p$-supersolutions} of \eqref{dirichlet-harmonious}. These classes are non-empty since they contain the constant functions on $\ol{\Om}$ defined by
$$
v\equiv\min_\Ga g \ \mbox{ and } \ w\equiv\max_\Ga g,
$$
respectively.
\par
Our aim is to obtain a solution of \eqref{dirichlet-harmonious} as an envelope of functions in $\cS_g$ or in $\cS^g$. In fact, we define the \textit{lower and upper Perron solutions} $\ul{u}^\ve$ and $\ol{u}^\ve$ of \eqref{dirichlet-harmonious} by 
$$
\ul{u}^\ve(x)=\sup_{v\in \cS_g}v(x) \ \mbox{ and } \ 
\ol{u}^\ve(x)=\inf_{w\in \cS^g}w(x) \ \mbox{ for } \ x\in\ol{\Om}.
$$
The following lemma collects some elementary properties of these functions.

\begin{lem}
\label{lem3}
Let $\Om$ be a bounded domain in $\RR^N$ and let $g\in C(\Ga)$. Then,
\begin{enumerate}[(i)]
\item 
 it holds that
$$
\min_\Ga g\le\ul{u}^\ve\le\ol{u}^\ve\le\max_{\Ga} g \ \mbox{ on } \ \ol{\Om};
$$

\item
$u\in C(\ol{\Om})$ is a solution of  \eqref{dirichlet-harmonious} 
if and only if $u\in \cS_g\cap \cS^g$;

\item
it holds that $u=\ul{u}^\ve=\ol{u}^\ve$ on $\ol{\Om}$, if $u\in C(\ol{\Om})$ is a solution of  \eqref{dirichlet-harmonious}. 
\end{enumerate}
\end{lem}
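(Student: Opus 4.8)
The plan is to dispatch the three items in the order (ii), (i), (iii), since (i) uses the weak comparison principle together with the fact (recorded in Example \ref{ex:examples}) that constant functions are variationally $p$-harmonious, while (iii) is a formal consequence of (i) and (ii). No part requires any genuinely new idea: the content of the lemma is that the Perron envelopes are correctly sandwiched and that solutions, if they exist, are pinned down as $\ul{u}^\ve=\ol{u}^\ve$.

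I would prove (ii) first, purely by unwinding definitions. If $u\in C(\ol{\Om})$ solves \eqref{dirichlet-harmonious}, then $u=\mu_p^\ve[u]$ in $\Om$ yields simultaneously $u\le\mu_p^\ve[u]$ and $u\ge\mu_p^\ve[u]$, while $u=g$ on $\Ga$ yields $u\le g$ and $u\ge g$; hence $u\in\cS_g$ and $u\in\cS^g$, that is, $u\in\cS_g\cap\cS^g$. Conversely, membership in $\cS_g\cap\cS^g$ gives $u\le\mu_p^\ve[u]\le u$ in $\Om$ and $g\le u\le g$ on $\Ga$, so the two defining equalities of \eqref{dirichlet-harmonious} hold. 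There is nothing to estimate here.

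For (i), the two outer inequalities come from testing the envelopes against the constant competitors already identified as non-empty witnesses of the two classes: the constant $\min_\Ga g$ lies in $\cS_g$ (being $p$-harmonious it satisfies $v=\mu_p^\ve[v]$, and $\min_\Ga g\le g$ on $\Ga$), so $\ul{u}^\ve\ge\min_\Ga g$; symmetrically $\ol{u}^\ve\le\max_\Ga g$. The middle inequality $\ul{u}^\ve\le\ol{u}^\ve$ is where the weak comparison principle enters: given arbitrary $v\in\cS_g$ and $w\in\cS^g$, $v$ is $p$-subharmonious, $w$ is $p$-superharmonious, and $v\le g\le w$ on $\Ga$; since $v,w\in C(\ol{\Om})$ they are in particular in $\usc(\ol{\Om})$ and $\lsc(\ol{\Om})$ respectively, so the hypotheses of Theorem \ref{th:weak-comparison} are met and $v\le w$ on $\ol{\Om}$. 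Taking the supremum over $v\in\cS_g$ and the infimum over $w\in\cS^g$ then gives $\ul{u}^\ve\le\ol{u}^\ve$.

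Finally, (iii) follows by combining the previous two. If $u\in C(\ol{\Om})$ solves \eqref{dirichlet-harmonious}, then by (ii) $u\in\cS_g\cap\cS^g$; membership in $\cS_g$ forces $\ul{u}^\ve\ge u$ by definition of the supremum, and membership in $\cS^g$ forces $\ol{u}^\ve\le u$ by definition of the infimum. Chaining these with the inequality $\ul{u}^\ve\le\ol{u}^\ve$ from (i) yields $u\le\ul{u}^\ve\le\ol{u}^\ve\le u$, so all three coincide. The only point demanding a moment's care is the verification, in (i), that the continuity of competitors supplies exactly the semicontinuity regularity required by Theorem \ref{th:weak-comparison}; beyond that the argument is entirely formal.
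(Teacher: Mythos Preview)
Your proposal is correct and follows essentially the same approach as the paper's proof: the weak comparison principle (Theorem \ref{th:weak-comparison}) yields $\ul{u}^\ve\le\ol{u}^\ve$, the constant competitors give the outer bounds in (i), item (ii) is purely definitional, and (iii) follows from the chain $\ol{u}^\ve\le u\le\ul{u}^\ve\le\ol{u}^\ve$. The only difference is cosmetic---you dispatch the items in the order (ii), (i), (iii) and spell out a few more details---but the logical content is identical.
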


\begin{proof}
(i) By the weak comparison principle (Theorem \ref{th:weak-comparison}), we have that $v\leq w$ on $\ol{\Om}$ for every $v\in \cS_g$ and $w\in \cS^g$, being as $v\le g\le w$ on $\Ga$. Thus, $\ul{u}^\ve\le\ol{u}^\ve$, clearly, by the properties of supremum and infimum. The other two inequalities easily follow from the fact that the two relevant constant functions belong to the classes $\cS_g$ or $\cS_g$.
\par
(ii) 
The statement is trivial by the definitions of $\cS_g$ and $\cS^g$. 
\par
(iii) If $u\in C(\ol{\Om})$ is a solution of \eqref{dirichlet-harmonious}, then (ii) gives that
$u\in \cS_g\cap \cS^g$, and hence $\ol{u}^\ve\le u\le \ul{u}^\ve\le\ol{u}^\ve$ on $\ol{\Om}$.
\end{proof}

In analogy with the classical case and as already mentioned in the introduction, we shall say that
a function $w$ is a \textit{barrier} at a point $x_0\in\Ga$ for the Dirichlet-type problem \eqref{dirichlet-harmonious} if
$w\in C(\ol{\Om})$, 
$w$ is superharmonious in $\Om$,  
and
$w>0$ in $\ol{\Om}\setminus\{x_0\}$ with $w(x_0)=0$. 
If $x_0\in\Ga$ admits a barrier, we say that $x_0$ is  a \textit{regular point} for \eqref{dirichlet-harmonious}.

\begin{prop}
\label{prop:barrier-result1}
If $x_0\in\Ga$ is a regular point for $\eqref{dirichlet-harmonious}$,
then 
$$
\ul{u}^\ve(x_0)=\ol{u}^\ve(x_0)=g(x_0).
$$
\end{prop}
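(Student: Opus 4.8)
The plan is to run the classical Perron barrier argument, adapted to the operator $\mu_p^\ve$. By Lemma \ref{lem3}(i) we already know that $\ul{u}^\ve(x_0)\le\ol{u}^\ve(x_0)$, so it suffices to establish the two one-sided bounds $\ol{u}^\ve(x_0)\le g(x_0)+\eta$ and $\ul{u}^\ve(x_0)\ge g(x_0)-\eta$ for every $\eta>0$, and then let $\eta\to 0$. Each bound will be produced by exhibiting an explicit member of $\cS^g$ (respectively $\cS_g$) built from the barrier $w=w^{x_0}$ whose value at $x_0$ is exactly $g(x_0)+\eta$ (respectively $g(x_0)-\eta$), and invoking the definitions of $\ol{u}^\ve$ and $\ul{u}^\ve$ as infimum and supremum.

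For the upper bound, I would fix $\eta>0$ and use continuity of $g$ at $x_0$ to choose $\delta>0$ with $g(x)<g(x_0)+\eta$ for all $x\in\Ga$ with $|x-x_0|<\delta$. Since $w$ is continuous and strictly positive on the compact set $\{x\in\ol{\Om}:|x-x_0|\ge\delta\}$, it has a positive minimum $m_\delta>0$ there, and I would pick $k\ge 0$ so large that $k\,m_\delta\ge\max_\Ga g-g(x_0)-\eta$. Set $W=(g(x_0)+\eta)+k\,w$. The key structural check is that $W$ is variationally $p$-superharmonious: by additivity with constants (property (a)), homogeneity (property (b)), and the superharmoniousness of $w$, one has $\mu_p^\ve[W]=(g(x_0)+\eta)+k\,\mu_p^\ve[w]\le(g(x_0)+\eta)+k\,w=W$ in $\Om$. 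On $\Ga$, the choice of $\delta$ handles the points near $x_0$ (where $w\ge 0$ already gives $W\ge g(x_0)+\eta>g$), while the choice of $k$ handles the points with $|x-x_0|\ge\delta$; hence $W\ge g$ on $\Ga$ and $W\in\cS^g$. Therefore $\ol{u}^\ve(x_0)\le W(x_0)=g(x_0)+\eta+k\,w(x_0)=g(x_0)+\eta$, since $w(x_0)=0$.

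The lower bound is obtained symmetrically with $V=(g(x_0)-\eta)-k\,w$ (enlarging $k$ if necessary). Here $-w$ is subharmonious because $\mu_p^\ve[-w]=-\mu_p^\ve[w]\ge -w$, so by the same additivity and homogeneity argument $\mu_p^\ve[V]\ge V$ in $\Om$; and the analogous boundary estimates give $V\le g$ on $\Ga$, so $V\in\cS_g$ and $\ul{u}^\ve(x_0)\ge V(x_0)=g(x_0)-\eta$. Combining the two bounds with Lemma \ref{lem3}(i) yields $g(x_0)-\eta\le\ul{u}^\ve(x_0)\le\ol{u}^\ve(x_0)\le g(x_0)+\eta$, and letting $\eta\to 0$ gives $\ul{u}^\ve(x_0)=\ol{u}^\ve(x_0)=g(x_0)$.

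I do not expect a genuinely hard step here: the argument is a faithful transcription of the classical Perron barrier proof. The only point that must be verified with care, and the place where the specific nature of $\mu_p^\ve$ enters, is that a constant plus a nonnegative multiple of a superharmonious function is again superharmonious (and the dual statement for subharmonious functions). This is exactly guaranteed by the additivity-with-constants and homogeneity properties (a)--(b) of $\nu_p$, which pass directly to $\mu_p^\ve$ since $\mu_p^\ve[v](x)=\nu_p^{r_\ve(x)}[v](x)$ is computed pointwise; once this is in place, the rest is routine bookkeeping with $\delta$ and $k$.
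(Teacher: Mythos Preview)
Your proposal is correct and follows essentially the same classical Perron barrier argument as the paper: both construct the test functions $g(x_0)\pm\eta\pm(\text{constant})\cdot w$, verify membership in $\cS^g$ and $\cS_g$ respectively, and evaluate at $x_0$ where $w(x_0)=0$. The only differences are cosmetic --- the paper packages both bounds together via $|g-g(x_0)|\le\eta+M_\de w$ with $M_\de=\max_{\Ga\setminus B_\de(x_0)}|g-g(x_0)|/w$, whereas you treat the two sides separately and are more explicit in invoking properties (a)--(b) to check super/subharmoniousness of the test functions.
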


\begin{proof}
The proof proceeds as in the classical case. Let $\eta>0$. Since $g\in C(\Ga)$, we can find $\de>0$ such that
$|g-g(x_0)|<\eta$ on $\Ga\cap B_\de(x_0)$. Also, if $w$ is a barrier at $x_0$, 
the following number is well-defined:
$$
M_\de=\max_{\Ga\setminus B_\de(x_0)}\frac{|g-g(x_0)|}{w}.
$$
Thus, we infer that
$$
|g-g(x_0)|\le \eta+M_\de\, w \ \mbox{ on } \ \Ga.
$$
This fact gives that the functions $g(x_0)+\eta+M_\de\,w$ and $g(x_0)-\eta-M_\de\,w$ belong to the classes $\cS^g$ and $\cS_g$, respectively. Hence, by definition
$$
g(x_0)-\eta-M_\de\,w\le \ul{u}^\ve\le\ol{u}^\ve\le g(x_0)+\eta+M_\de\,w \ \mbox{ on } \ \ol{\Om}
$$
and, in particular
$$
g(x_0)-\eta\le\ul{u}^\ve(x_0)\le\ol{u}^\ve(x_0)\le g(x_0)+\eta,
$$
being as $w(x_0)=0$. The desired conclusion then ensues, since $\eta>0$ is arbitrary.
\end{proof}

\begin{proof}[\bf Proof of Theorem \ref{thm:existence-uniqueness-harmonious}]
(i) Let us assume that every point in $\Ga$ admits a barrier for \eqref{dirichlet-harmonious}.
Lemma \ref{lem3} and the definition of $\ul{u}^\ve$ give that $\ul{u}^\ve$ is bounded and lower semicontinuous on $\ol{\Om}$, that is $\ul{u}^\ve\in L^\infty(\ol{\Om})\cap\lsc(\ol{\Om})$.
The assumption also gives that $\ul{u}^\ve=g$ on $\Ga$, thanks to Proposition \ref{prop:barrier-result1}. 
\par
Next, we define a sequence of functions by the following iterative scheme:
$$
u_1=\ul{u}^\ve, \quad
u_{j+1}=\mu_p^\ve[u_j] \ \mbox{ on } \ \ol{\Om}, \mbox{ for } j\geq 1. 
$$
Proposition \ref{thm:semicontinuity} ensures that all the functions $u_j$ belong to $L^\infty(\ol{\Om})\cap\lsc(\ol{\Om})$.
Clearly, it holds that $u_j=g$ on $\Ga$ for every $j\geq 1$. 
\par
Now, since $v\le\ul{u}^\ve$ on $\ol{\Om}$ for every $v\in \cS_g$, 
we have that $v\le\mu_p^\ve[v]\le\mu_p^\ve[\ul{u}^\ve]$ in $\Om$ 
for every $v\in \cS_g$, which gives $\underline u^\ve\leq\mu_p^\ve[\underline u^\ve]$ in $\Omega$. Hence, we can infer that $u_1\le\mu_p^\ve[u_1]=u_2$ on $\ol{\Om}$. 
Then by monotonicity, we have that $u_2=\mu_p^\ve[u_1]\le\mu_p^\ve[u_2]=u_3$ and, by iterating,  
we obtain that $u_j\leq u_{j+1}$ on $\ol{\Om}$ for every $j\geq 1$. 
Thus, the sequence $\{u_j\}_{j\in\NN}$ is increasing on $\ol{\Om}$. By iteration, we also have that
$$
\min_\Ga g\le u_j\le \max_\Ga g \ \mbox{ on } \ \ol{\Om}, \ \mbox{ for every } \ j\ge 1,
$$
because, by Lemma \ref{lem3} (i), we see that 
$$
\min_\Ga g\le u_1=\ul{u}^\ve\le\ol{u}^\ve\le\max_{\Ga} g \ \mbox{ on } \ol{\Om}.
$$
Thus, the sequence $\{u_j\}_{j\in\NN}$ converges pointwise on $\ol{\Om}$ to a function $u$, which is lower semicontinuous on $\ol{\Om}$ and such that 
$$
\min_\Ga g\le u\le\max_{\Ga} g \ \mbox{ on } \ol{\Om} \ \mbox{ and } \ u=g \ \mbox{ on } \ \Ga.
$$
Moreover, 
applying Lemma \ref{lem:continuity-projection} gives that
$\mu_p^\ve[u_j]\to\mu_p^\ve[u]$ as $j\to\infty$ in $\Om$, 
and hence $u=\mu_p^\ve[u]$ in $\Om$, since $u_j=\mu_p^\ve[u_j]$ in $\Om$, for every $j\ge 1$. As a result, the function $u$ satisfies the Dirichlet problem \eqref{dirichlet-harmonious}.
\par
By playing on the fact that $\cS^g=-\cS_{-g}$, we can determine a decreasing sequence of functions $U_j$, bounded and upper semicontinuous on $\ol{\Om}$, which converge to another solution $U$ of \eqref{dirichlet-harmonious}. Clearly, such a sequence is initialized by choosing $U_1=\ol{u}^\ve$, so that, by comparison, the relevant iterating schemes tell us that $U\ge u$ on $\ol{\Om}$, being as $U_1=\ol{u}^\ve\ge\ul{u}^\ve=u_1$ on $\ol{\Om}$.
\par
Thus, we are left to prove that $u=U$ in $\ol{\Om}$. Notice first that $U-u$ is non-negative and belongs to $\usc(\ol{\Om})$, being as $U\in\usc(\ol{\Om})$ and $u\in\lsc(\ol{\Om})$.
Hence, there exists $x_0\in\ol{\Om}$ such that 
$$
\max_{\ol{\Om}}(U-u)=(U-u)(x_0).
$$ 
Our goal is to prove that $(U-u)(x_0)=0$. 
\par
By contradiction, we assume that $(U-u)(x_0)>0$ and consider the set 
$$
\Om^\sharp=\bigl\{x\in\ol{\Om}: (U-u)(x)=(U-u)(x_0)>0\bigr\}.
$$
Since $U-u=g-g=0$ on $\Ga$, we have that $\Om^\sharp$ is a subset of $\Om$.
Theorem \ref{thm:semicontinuity} then tells us that $U-u$ is continuous in $\Om$, and hence we can infer that $\Om^\sharp$ is closed in $\Om$, because it is the pre-image of the singleton $\{ (U-u)(x_0)\}\subset\RR$.
\par
Next, notice that $U-U(x_0)\le u-u(x_0)$ in $\Om$, since $U-u\le (U-u)(x_0)$ in $\Om$, so that, if we take a point $x\in\Om$, we get that
$$
\mu_p^\ve[U](x)-U(x_0)=\mu_p^\ve[U-U(x_0)](x)\le \mu_p^\ve[u-u(x_0)](x)=\mu_p^\ve[u](x)-u(x_0),
$$
by the monotonicity of $\mu_p^\ve$. Thus, we infer that
$$
U(x)-U(x_0)=\mu_p^\ve[U-U(x_0)](x)\le \mu_p^\ve[u-u(x_0)](x)=u(x)-u(x_0),
$$
since both $U$ and $u$ are variationally $p$-harmonious in $\Om$.
\par
Now, if $x\in\Om^\sharp$, we know that $u(x)-u(x_0)=U(x)-U(x_0)$, and hence we obtain that
$$
U(x)-U(x_0)=\mu_p^\ve[U-U(x_0)](x)=\mu_p^\ve[u-u(x_0)](x)=U(x)-U(x_0).
$$
The definition of $\mu_p^\ve$ and Lemma \ref{lem:strict-monotonicity} then tell us that 
$U-U(x_0)\equiv u-u(x_0)$, and hence $U-u\equiv (U-u)(x_0)$ on the ball $B_r(x)$ with $r=r_\ve(x)>0$. As a consequence, we have shown that $B_r(x)\subset\Om^\sharp$, that is $\Om^\sharp$ is open, since $x$ is arbitrarily chosen in $\Om^\sharp$. All in all, $\Om^\sharp$ is a closed and open subset of $\Om$ and, moreover, is non-empty, since it contains at least $x_0$. Therefore, $\Om^\sharp=\Om$, because $\Om$ is connected, that is $U-u\equiv (U-u)(x_0)$ in $\Om$. 
\par
In conclusion, we find a contradiction by taking a sequence of points $x_n\in\Om$ that converge to any point $\ol{x}\in\Ga$. In fact, since $U-u\in\lsc(\ol{\Om})$, we have that
$$
0<(U-u)(x_0)=\limsup_{n\to\infty} (U-u)(x_n)\le (U-u)(\ol{x})=g(\ol{x})-g(\ol{x})=0.
$$
\end{proof}

\begin{cor}
\label{cor:any-initialization}
Let $w:\ol{\Om}\to\RR$ be any function such that  $\ul{u}^\ve\le w\le\ol{u}^\ve$ on $\ol{\Om}$. Consider the sequence of functions defined by
$$
w_1=w, \quad
w_{j+1}=\mu_p^\ve[w_j] \ \mbox{ on } \ \ol{\Om} \ \mbox{ for } j=1, 2, \cdots. 
$$
Then, we have that 
$$
\lim_{j\to\infty} w_j\to u^\ve=\ul{u}^\ve=\ol{u}^\ve \ \mbox{ on } \ \ol{\Om}.
$$
\end{cor}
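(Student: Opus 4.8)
The plan is to exploit the two monotone iterative schemes already constructed in the proof of Theorem \ref{thm:existence-uniqueness-harmonious} and to squeeze the sequence $\{w_j\}$ between them. Since we are in the setting where all points of $\Ga$ are regular, so that $\ul{u}^\ve=\ol{u}^\ve=u^\ve$, that proof produces an increasing sequence $\{u_j\}$, with $u_1=\ul{u}^\ve$ and $u_{j+1}=\mu_p^\ve[u_j]$, and a decreasing sequence $\{U_j\}$, with $U_1=\ol{u}^\ve$ and $U_{j+1}=\mu_p^\ve[U_j]$; both were shown to converge pointwise on $\ol{\Om}$ to the unique solution $u^\ve$ of \eqref{dirichlet-harmonious}, the equality of the two limits being exactly the identity $u=U$ established at the end of that proof.

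First I would record that $w$ lies in the domain of $\mu_p^\ve$ at every step. By hypothesis,
$$
\min_\Ga g\le\ul{u}^\ve\le w\le\ol{u}^\ve\le\max_\Ga g \ \mbox{ on } \ \ol{\Om},
$$
so $w$ is bounded and hence belongs to $L^p(\Om)$ (keeping, for $p=1$, the standing continuity convention). Consequently the iteration $w_{j+1}=\mu_p^\ve[w_j]$ is well-defined, and each $w_j$ stays bounded between the same two constants; in particular $w_j=g$ on $\Ga$ for every $j$.

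Next I would prove, by induction on $j$, the two-sided bound
$$
u_j\le w_j\le U_j \ \mbox{ on } \ \ol{\Om}.
$$
The base case $j=1$ is precisely the assumption $\ul{u}^\ve\le w\le\ol{u}^\ve$. For the inductive step, applying the monotonicity of $\mu_p^\ve$ (Corollary \ref{cor:mean-bound}) to $u_j\le w_j\le U_j$ gives $u_{j+1}=\mu_p^\ve[u_j]\le\mu_p^\ve[w_j]=w_{j+1}\le\mu_p^\ve[U_j]=U_{j+1}$ on $\ol{\Om}$, which closes the induction.

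Finally, since $u_j\to u^\ve$ and $U_j\to u^\ve$ pointwise on $\ol{\Om}$, the squeeze immediately yields $w_j\to u^\ve=\ul{u}^\ve=\ol{u}^\ve$ pointwise on $\ol{\Om}$, as claimed. I do not anticipate a genuine obstacle here: the argument is purely order-theoretic, and the only points deserving a line of care are the boundary points, where the iteration fixes the common value $g$ so that the squeeze is trivial, and the fact that the two monotone schemes share the same limit $u^\ve$, which is exactly the content of Theorem \ref{thm:existence-uniqueness-harmonious}.
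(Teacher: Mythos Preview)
Your proposal is correct and follows essentially the same approach as the paper: establish $u_j\le w_j\le U_j$ by induction via the monotonicity of $\mu_p^\ve$, then squeeze using $u_j\to u^\ve$ and $U_j\to u^\ve$. The paper's version is terser (it writes $u\le\liminf w_j\le\limsup w_j\le U=u$), but the content is identical.
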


\begin{proof}
By comparison, we can infer that
$$
u_j\le w_j\le U_j \ \mbox{ on } \ \ol{\Om} 
$$
for any $j\in\NN$. Thus, we have that
$$
u\le\liminf_{j\to\infty} w_j\le\limsup_{j\to\infty} w_j\le U=u,
$$
which gives the desired claim.
\end{proof}

\begin{cor}
\label{cor:barrier}
Let $\Om$ be a bounded domain in $\RR^N$, satisfying the uniform exterior sphere condition at any point in $\Ga$. Then there exists $0<\ve_\Ga\le\ve_0$ such that 
for every $0<\ve<\ve_\Ga$, all points in $\Ga$ are regular for \eqref{dirichlet-harmonious}.
\par
In particular, for every $g\in C(\Ga)$, the Dirichlet-type problem \eqref{dirichlet-harmonious} admits the unique solution $u^\ve=\ul{u}^\ve=\ol{u}^\ve\in C(\ol{\Om})$ defined by Perron's method. 
\end{cor}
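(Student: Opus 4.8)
The plan is to reduce the statement to the construction, at each point $x_0\in\Ga$, of a barrier in the sense of Section~\ref{sec:existence-harmonious-functions}, with a threshold $\ve_\Ga$ that is \emph{uniform} in $x_0$. Once every $x_0\in\Ga$ is shown to admit a barrier for all $\ve\in(0,\ve_\Ga)$, the conclusion is immediate: Theorem~\ref{thm:existence-uniqueness-harmonious} then yields, for each such $\ve$ and each $g\in C(\Ga)$, the unique solution $u^\ve=\ul u^\ve=\ol u^\ve\in C(\ol\Om)$ of \eqref{dirichlet-harmonious}. So the whole work lies in producing the barriers.

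For the construction I would exploit Example~\ref{ex:examples}~(iii). By the uniform exterior sphere condition there is a radius $R>0$ such that every $x_0\in\Ga$ admits a ball $B_R(y_0)$, with $y_0=y_0(x_0)$, $|x_0-y_0|=R$, whose closure meets $\ol\Om$ only at $x_0$. Fixing $\al>0$ large enough that $\al(p-1)+p-N>0$, equivalently $p>\frac{\al+N}{\al+1}$ (such $\al$ exists for every $p\in(1,\infty]$), I would set
$$
w^{x_0}(x)=R^{-\al}-|x-y_0|^{-\al},\qquad x\in\ol\Om.
$$
Since $\ol\Om$ avoids $y_0$, the function $w^{x_0}$ is smooth, hence continuous, on a neighborhood of $\ol\Om$, with $\na w^{x_0}\ne 0$ there. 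The decreasing character of $t\mapsto t^{-\al}$ together with $|x-y_0|\ge R$ on $\ol\Om$ (equality only at $x_0$) gives $w^{x_0}(x_0)=0$ and $w^{x_0}>0$ on $\ol\Om\setminus\{x_0\}$. Finally, using that $\De_p^G$ is odd and the computation of $\De_p^G\ga_\al$ recalled in Example~\ref{ex:examples}~(iii),
$$
\De_p^G w^{x_0}(x)=-\De_p^G\ga_\al(x-y_0)=-\frac{\al\,[\al(p-1)+p-N]}{p}\,|x-y_0|^{-(\al+2)}<0,
$$
so $w^{x_0}$ has exactly the sign needed to be variationally $p$-superharmonious.

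It then remains to turn the sign of $\De_p^G w^{x_0}$ into the superharmonicity inequality $w^{x_0}\ge\mu_p^\ve[w^{x_0}]$ in $\Om$, uniformly in $x_0$, via the expansion of Proposition~\ref{uni-amvp}. I expect this uniform passage to be the main obstacle, for two reasons. First, Proposition~\ref{uni-amvp} is stated on sets $\ol A\subset\Om$, whereas superharmonicity must hold up to $\Ga$, where $r_\ve(x)=\dist(x,\Ga)\to0$; I would handle this by extending $w^{x_0}$ smoothly to a fixed neighborhood of $\ol\Om$ and invoking the pointwise expansion \eqref{p-amvp} with the remainder controlled uniformly in $x$ by the $C^2$-norm of $w^{x_0}$ and a lower bound on $|\na w^{x_0}|$ on $\ol\Om$. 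Second, $\ve_\Ga$ must be independent of $x_0$: here I would use that, as $x_0$ ranges over $\Ga$, the quantity $|x-y_0(x_0)|$ stays in $[R,\,R+\diam\ol\Om]$, so that $\De_p^G w^{x_0}\le-c<0$ on $\ol\Om$ with $c$ uniform in $x_0$, while the $C^2$-bounds and the gradient lower bound are likewise uniform. Consequently the remainder in \eqref{p-amvp} is dominated by the uniformly negative leading term once $\ve<\ve_\Ga$, for a single $\ve_\Ga\in(0,\ve_0]$, giving $\mu_p^\ve[w^{x_0}]\le w^{x_0}$ in $\Om$ for every $x_0$ simultaneously. This makes every boundary point regular for all $\ve\in(0,\ve_\Ga)$, and the final assertion is then precisely Theorem~\ref{thm:existence-uniqueness-harmonious}.
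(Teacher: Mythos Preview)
Your proposal is correct and follows essentially the same route as the paper's own proof: the barrier $w^{x_0}(x)=R^{-\al}-|x-y_0|^{-\al}$ built from the exterior sphere via Example~\ref{ex:examples}(iii), followed by an appeal to Theorem~\ref{thm:existence-uniqueness-harmonious}. You are in fact more careful than the paper about the uniformity of $\ve_\Ga$ in $x_0$ and about applying the asymptotic expansion \eqref{p-amvp} up to $\Ga$; the paper simply picks $\al=(N+1)/(p-1)$, invokes Example~\ref{ex:examples}(iii), and writes $0<\ve<\min(\ve_0,r_\Ga)$ without spelling out these points.
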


\begin{proof}
Our assumption is that there exists s radius $R>0$ such that, for every $x_0\in\Ga$, we can find a ball
$B_R(y_0)$ such that $\ol{B_R(y_0)}\cap\ol{\Om}=\{x_0\}$. Hence, we define a function $w$ by 
$$
w(x)=\frac{1}{R^\al}-\frac{1}{|x-y_0|^\al} \ \mbox{ for } \ x\in\ol{\Om}. 
$$
\par
It is clear that $w\in C(\ol{\Om})$, $w$ is positive in $\ol{\Om}\setminus\{ x_0\}$, $w(x_0)=0$. Finally, if we choose, say, $\al=(N+1)/(p-1)$, by Example \ref{ex:examples} (iii), for $0<\ve<\min(\ve_0, r_\Ga)$, $w$ is $p$-superharmonious in $\Om$.
\par
Therefore, with these specifications, $w$ is the desired barrier. 
\end{proof}

\smallskip

\section{Limits of variationally $p$-harmonious functions}
\label{sec:limits-harmonious-functions}
\noindent

In this section, we shall prove Theorem \ref{th:limit-of-harmonious}. We will first collect in Section \ref{subsec:geralized-viscosity-solutions} some known facts on the Dirichlet problem \eqref{dirichlet-harmonic}, that we recall here:
$$
\Delta_p^G u=0 \ \mbox{ in } \ \Om,\quad u=g \ \mbox{ on } \ \Ga.
$$

\subsection{Generalized viscosity solutions for \eqref{dirichlet-harmonic}}
\label{subsec:geralized-viscosity-solutions}
We start with the classical definition of a viscosity solution of an elliptic degenerate equation.
Consider a continuous mapping $F : \ol{\Om}\times\RR\times(\RR^N\setminus\{0\})\times{\cS}^N\to\RR$. Here, $\cS^N$ is the set of symmetric $N\times N$ matrices.
The upper and lower semi-continuous envelopes $F^*$ 
and $F_*$ of $F$ are the functions defined by 
$$
F^*(x,s,\xi,X)=\limsup_{(y,t,\eta,Y)\to(x,s,\xi,X)}F(y,t,\eta,Y) 
$$
for $(x,s,\xi,X)\in\ol{\Om}\times\Bbb R\times(\RR^N\setminus\{0\})\times{\cS}^N$, and by $F_*=-(-F)^*$. 
\par
We recall from \cite{CIL, Ko} that a bounded and upper (resp. lower) semi-continuous function 
$u$ is {\it a viscosity subsolution (resp. supersolution)} of $F=0$ in $\ol{\Om}$ if, 
for any $(x,\phi)\in\ol{\Om}\times C^2(\ol{\Om})$ with $\nabla\phi(x)\ne 0$ and such that $u-\phi$ has a local maximum (resp. minimum) at $x$ with $\phi(x)=u(x)$, 
it holds that 
$$
F_*(x,\phi(x),\nabla\phi(x),\nabla^2\phi(x))\le 0 \quad (\mbox{resp. } F^*(x,\phi(x),\nabla\phi(x),\nabla^2\phi(x))\ge 0).
$$
{\it A viscosity solution} of $F=0$ in $\ol{\Om}$ is a function $u\in C(\ol{\Om})$ which is both a viscosity subsolution and supersolution of $F=0$ in $\ol{\Om}$. 
\par
Also, in \cite{BP1, BP2, Is}, the following definitions are considered.
A function $u$ in $\usc(\ol{\Om})\cap L^\infty(\Om)$ (resp. in $\lsc(\ol{\Om})\cap L^\infty(\Om)$) is {\it a generalized viscosity subsolution (resp. supersolution)} of \eqref{dirichlet-harmonic} if, for any $(x,\phi)\in \ol{\Om}\times C^2(\ol{\Om})$ with $\na\phi(x)\ne 0$ and such that
$u-\phi$ has a local maximum (resp. minimum) at $x$ with $u(x)=\phi(x)$, it holds that 
\begin{eqnarray*}
&&-\Delta_p^G\phi(x)\le 0 \ \mbox{ (resp.} -\Delta_p^G\phi(x)\ge 0) \ \hspace{63pt}\mbox{ for } \ x\in\Om, \\
&&\min\left\{
-\Delta_p^G\phi(x),\phi(x)-g(x)
\right\} \le 0 \\ 
&&\hspace{40pt}\mbox{ (resp. } \max\left\{
-\Delta_p^G\phi(x),\phi(x)-g(x)\right\} \ge 0) \ \mbox{ for } \ x\in\Ga.
\end{eqnarray*}
{\it A generalized viscosity solution} of \eqref{dirichlet-harmonic} is a continuous function on $\ol{\Om}$, which is both a generalized viscosity subsolution and supersolution.
This weaker notion of viscosity solution appears naturally as the limiting situation of the dynamic programming principle with respect to \eqref{p-amvp} as is observed below. This formulation has been proved to be equivalent (we refer the reader to \cite{BB,TMP}) to the standard definition of viscosity solution of $F=0$ for the operator $F$ defined by
$$
F(x,s,\xi, X)=-\frac{\tr(X)}{p}-\frac{p-2}{p}\frac{\lan X\xi,\xi\ran}{|\xi|^2} 
$$
for $(x,s,\xi,X)\in\ol{\Om}\times\RR\times(\RR^N\setminus\{0\})\times{\cS}^N$. Here, $\tr(X)$ stands for the trace of the matrix $X$. 
\par
In order to prove Theorem \ref{th:limit-of-harmonious}, we follow an argument used in \cite{BS}. To this aim, we need to set up some further notation.

\begin{prop}
Let $G : \ol{\Om}\times\RR\times(\RR^N\setminus\{0\})\times{\cS}^N\to\RR$ be the mapping defined by 
$$
G(x,s,\xi,X)=\begin{cases}
\displaystyle -\frac{\tr(X)}{p}-\frac{p-2}{p}\frac{\lan X\xi,\xi\ran}{|\xi|^2} \ &\mbox{ if } \ x\in\Om,\\
s-g(x) &\mbox{ if } \ x\in\Ga,
\end{cases}
$$
for $(x,s,\xi,X)\in\ol{\Om}\times\RR\times(\RR^N\setminus\{0\})\times{\cS}^N$. 
Then, we compute:
$$
G^*(x,s,\xi,X)=G_*(x,s,\xi,X)=-\frac{\tr(X)}{p}-\frac{p-2}{p}\frac{\lan X\xi,\xi\ran}{|\xi|^2}  \ \mbox{ if } \ x\in\Om
$$ 
and, if $x\in\Ga$, 
\begin{eqnarray*}
&&G^*(x,s,\xi,X)=\displaystyle\max\left\{
-\frac{\tr(X)}{p}-\frac{p-2}{p}\frac{\lan X\xi,\xi\ran}{|\xi|^2}, s-g(x)
\right\}, \\
&&G_*(x,s,\xi,X)=\displaystyle\min\left\{
-\frac{\tr(X)}{p}-\frac{p-2}{p}\frac{\lan X\xi,\xi\ran}{|\xi|^2}, s-g(x)
\right\}.
\end{eqnarray*}
\end{prop}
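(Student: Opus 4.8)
The plan is to compute $G^*$ and $G_*$ directly from their definitions, exploiting that $G$ is assembled from two separately continuous pieces. Write $L=-\tr(X)/p-\frac{p-2}{p}\lan X\xi,\xi\ran/|\xi|^2$ for the elliptic part, which is continuous in $(\xi,X)$ on $(\RR^N\setminus\{0\})\times\cS^N$, and $D=s-g(x)$ for the Dirichlet part, which is continuous on $\ol{\Om}\times\RR$ because $g\in C(\Ga)$. The only discontinuity of $G$ occurs across $\Ga$, where the defining formula switches from $L$ to $D$; accordingly I treat interior and boundary points separately. Recall that $G_*=-(-G)^*$, so that $G_*(x,s,\xi,X)$ equals the liminf of $G$ along sequences converging to $(x,s,\xi,X)$, which lets me read off the lower envelope from the same analysis.

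For an interior point $x\in\Om$: since $\Om$ is open, a full neighborhood of $x$ lies in $\Om$, so for $(y,t,\eta,Y)$ sufficiently close to $(x,s,\xi,X)$ we have $y\in\Om$ and hence $G(y,t,\eta,Y)=-\tr(Y)/p-\frac{p-2}{p}\lan Y\eta,\eta\ran/|\eta|^2$. This is continuous at $(\xi,X)$ because $\xi\ne 0$, so $G$ is itself continuous at the point and $G^*(x,s,\xi,X)=G_*(x,s,\xi,X)=L$, which is the first claim.

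For a boundary point $x\in\Ga=\partial\Om$, I will show $G^*=\max\{L,D\}$ by an upper bound together with attainment. Given any sequence $(y_n,t_n,\eta_n,Y_n)\to(x,s,\xi,X)$, I split the indices into those with $y_n\in\Om$, along which $G(y_n,\cdots)=-\tr(Y_n)/p-\frac{p-2}{p}\lan Y_n\eta_n,\eta_n\ran/|\eta_n|^2\to L$, and those with $y_n\in\Ga$, along which $G(y_n,\cdots)=t_n-g(y_n)\to D$; every subsequential limit therefore lies in $\{L,D\}$, giving $\limsup_n G(y_n,\cdots)\le\max\{L,D\}$ and $\liminf_n G(y_n,\cdots)\ge\min\{L,D\}$. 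To see that both values are actually realized, I note that, since $x\in\partial\Om$, there exist points $y_n\in\Om$ with $y_n\to x$; keeping $(t_n,\eta_n,Y_n)=(s,\xi,X)$ fixed makes $G\equiv L$ along this sequence, whereas the constant sequence at $(x,s,\xi,X)$ gives $G=D$. Hence both $L$ and $D$ are subsequential limits, so $G^*=\max\{L,D\}$, and applying the same reasoning to $-G$, equivalently reading off the liminf, yields $G_*=\min\{L,D\}$.

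The computation is elementary and the expected difficulties are minor. The one point demanding care is that the continuity of the elliptic part may be invoked only where $\xi\ne 0$; this is exactly why the envelopes are taken over the restricted domain $\RR^N\setminus\{0\}$, which sidesteps the singularity of $\lan X\xi,\xi\ran/|\xi|^2$ at the origin. The only genuinely structural input is the topological fact that a boundary point is approachable from within $\Om$, ensuring that $L$ is attained in the limit rather than merely dominated; the subsequence splitting in the upper-bound step is then the most delicate bookkeeping, but it is routine once the two regimes $y_n\in\Om$ and $y_n\in\Ga$ are isolated.
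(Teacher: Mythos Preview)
Your proof is correct and follows essentially the same approach as the paper: continuity of $G$ at interior points gives $G^*=G_*=G$ there, and at a boundary point one shows that the two values $L$ and $D$ are precisely the possible subsequential limits, via interior approximation for $L$ and the constant sequence for $D$, with $G_*$ read off from $G_*=-(-G)^*$. The only difference is presentational: the paper phrases the boundary case in terms of suprema over shrinking product neighborhoods, while you work with sequences and an explicit index-splitting argument; the content is the same.
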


\begin{proof}
The first formula for $G^*=G_*=G$ follows from the continuity of $G$ at 
interior points of $\Om$. When $x\in\Ga$ instead, 
we observe that, for every sufficiently small $\de>0$, the supremum 
of $G$ on $(B_\de(x)\cap\ol{\Om})\times B_\de(s)\times B_\de(\xi)\times B_\de(X)$ (where the balls must be intended in the relevant Euclidean spaces) equals
$$
\max\left\{
-\frac{\tr(X)}{p}-\frac{p-2}{p}\frac{\lan X\xi,\xi\ran}{|\xi|^2}, s-g(x)
\right\}. 
$$
The formula for $G^*$ then ensues. The one for $G_*$ easily follows from the formula $G_*=-(-G)^*$. 
\end{proof}

\begin{rem}
{\rm
\label{F^*F_*-computation}
Let $x\in\ol{\Om}$ and $\phi\in C^2(\ol{\Om})$ with $\nabla\phi(x)\ne0$. 
Then, we have that
$$
G(x,\phi(x),\na\phi(x),\nabla^2\phi(x))=
\begin{cases}
-\De^G_p\phi(x) \ &\mbox{ if } \ x\in\Om, \\
\phi(x)-g(x) \ &\mbox{ if } \ x\in\Ga,
\end{cases}
$$
and hence, we obtain for $x\in\Ga$, 
\begin{eqnarray*}
&&G^*(x,\phi(x),\na\phi(x),\na^2\phi(x))=\max\left\{-\De^G_p\phi(x), \,\phi(x)-g(x)\right\},\\
&&G_*(x,\phi(x),\na\phi(x),\na^2\phi(x))=\min\left\{-\De^G_p\phi(x), \,\phi(x)-g(x)\right\}. 
\end{eqnarray*}
}
\end{rem}

By Remark \ref{F^*F_*-computation} 
together with \cite[Theorem 3.3]{TMP}
and the classical weak comparison principle for the viscosity 
solution of \eqref{dirichlet-harmonic} (see \cite{JLM, LM}), we immediately obtain 
the following weak comparison principle 
for $G=0$ in $\ol{\Om}$.

\begin{thm}\label{wcp-p-laplace}
Let $u\in\usc(\ol{\Om})\cap L^\infty(\Om)$ and $v\in\lsc(\ol{\Om})\cap L^\infty(\Om)$.
If $u,v$ are respectively a generalized viscosity subsolution and a generalized viscosity supersolution of $G=0$ in $\ol{\Om}$, then we have that $u\leq v$ on $\ol{\Om}$. 
\end{thm}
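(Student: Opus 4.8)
The plan is to deduce the statement by recognizing that the two notions of generalized viscosity sub/supersolution of \eqref{dirichlet-harmonic} are nothing but the ordinary viscosity sub/supersolutions of the single equation $G=0$ posed on all of $\ol{\Om}$, and then to invoke a comparison principle for the latter. First I would use Remark \ref{F^*F_*-computation}. For any admissible test pair $(x,\phi)\in\ol{\Om}\times C^2(\ol{\Om})$ with $\nabla\phi(x)\ne 0$, the remark shows that at interior points $G_*(x,\phi(x),\nabla\phi(x),\nabla^2\phi(x))=G^*(x,\phi(x),\nabla\phi(x),\nabla^2\phi(x))=-\De^G_p\phi(x)$, while at boundary points $x\in\Ga$ one has $G_*=\min\{-\De^G_p\phi(x),\phi(x)-g(x)\}$ and $G^*=\max\{-\De^G_p\phi(x),\phi(x)-g(x)\}$. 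Comparing these formulas with the defining inequalities recalled just before the theorem, I conclude that $u\in\usc(\ol{\Om})\cap L^\infty(\Om)$ is a generalized viscosity subsolution of \eqref{dirichlet-harmonic} if and only if it is a standard viscosity subsolution of $G=0$ in $\ol{\Om}$, namely $G_*(x,\phi(x),\nabla\phi(x),\nabla^2\phi(x))\le 0$ at every such test pair, and symmetrically for supersolutions with $G^*\ge 0$. This reduces the claim to a comparison principle for $G=0$ in $\ol{\Om}$.

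Next I would assemble that comparison from the available ingredients. The interior part of $G$ is exactly $-\De^G_p$, and via \cite{BB, TMP} this generalized notion is equivalent to the standard viscosity formulation of $F=0$ for the operator $F(x,s,\xi,X)=-\tr(X)/p-\frac{p-2}{p}\lan X\xi,\xi\ran/|\xi|^2$; the classical weak comparison principle for the Dirichlet problem \eqref{dirichlet-harmonic} is then available from \cite{JLM, LM}. The boundary term $s-g(x)$ is continuous and strictly increasing in $s$, so the Dirichlet datum is taken in the relaxed (Barles--Perthame) viscosity sense encoded by the min/max envelopes above. Combining the interior comparison with this boundary structure is precisely the content of \cite[Theorem 3.3]{TMP}, and it yields $u\le v$ on $\ol{\Om}$ whenever $u$ is a subsolution and $v$ a supersolution of $G=0$.

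The hard part, and the reason this is more than the classical interior comparison, is the treatment of the boundary: one must exclude the possibility that $\max_{\ol{\Om}}(u-v)$ is positive and attained on $\Ga$. If I were to argue directly rather than cite, I would run the Crandall--Ishii doubling of variables, penalizing with $\frac{\al}{2}|x-y|^2$: a would-be interior contact point is ruled out by the degenerate ellipticity of $G$, and at a boundary contact point $\hat x\in\Ga$ the subsolution inequality $\min\{-\De^G_p\phi(\hat x),u(\hat x)-g(\hat x)\}\le 0$ and the supersolution inequality $\max\{-\De^G_p\psi(\hat x),v(\hat x)-g(\hat x)\}\ge 0$ are used together; if neither PDE branch already gives the contradiction, then $u(\hat x)\le g(\hat x)$ and $v(\hat x)\ge g(\hat x)$, forcing $u(\hat x)-v(\hat x)\le 0$. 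The delicate step is the compatibility of the doubling penalization with the singular and degenerate behavior of $\De^G_p$ near points where the gradient vanishes, which is exactly the difficulty that the equivalence results of \cite{BB, TMP} are designed to bypass; this is why I would lean on those references rather than reprove the comparison from scratch.
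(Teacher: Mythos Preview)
Your proposal is correct and matches the paper's own justification: the theorem is stated as an immediate consequence of Remark~\ref{F^*F_*-computation}, \cite[Theorem~3.3]{TMP}, and the classical weak comparison principle for \eqref{dirichlet-harmonic} from \cite{JLM, LM}. The additional sketch of a direct doubling-of-variables argument is extra detail not present in the paper, but the core approach is identical.
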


\medskip

\subsection{An approximation scheme by $p$-harmonious functions}
We introduce a family 
of mappings ${\cA}_\ve : \RR\times\ol{\Om}\times C(\ol{\Om})\to\RR$ defined for $0<\ve<\ve_0$ by 
$$
{\cA}_\ve(s,x,u)=\begin{cases}
\displaystyle \frac{2(N+p)\ve}{p}\,\frac{s-\mu_p^\ve[u](x)}{r_\ve(x)^2} \ &\mbox{ if } \ x\in\Om,
\\
\ve\,[s-g(x)] \ &\mbox{ if } \ x\in\Ga 
\end{cases}
$$
for $(s,x,u)\in\Bbb R\times\ol{\Om}\times C(\ol{\Om})$.
Here, $\ve_0>0$ is taken so small so as to be sure that 
the set $\Om_\ve$ is 
still a domain for every $0<\ve<\ve_0$. 

\begin{lem}
\label{consistency-p}
If $x\in\ol{\Om}$ and $\phi\in C^2(\ol{\Om})$ is such that 
$\nabla\phi(x)\ne 0$, then we have that
\begin{eqnarray*}
&&\displaystyle\limsup_{(\ve, y, \de)\to(0^+, x, 0)}\frac{
{\cA}_\ve(\phi(y)+\de, y, \phi+\de)
}{\ve}=G^*(x,\phi(x),\nabla \phi(x),\nabla^2\phi(x)),\\
&&\displaystyle\liminf_{(\ve, y, \de)\to(0^+, x, 0)}\frac{
{\cA}_\ve(\phi(y)+\de, y, \phi+\de)
}{\ve}=G_*(x,\phi(x),\nabla \phi(x),\nabla^2\phi(x)). 
\end{eqnarray*}
\end{lem}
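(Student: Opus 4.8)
The plan is to compute the quotient $\cA_\ve(\phi(y)+\de,y,\phi+\de)/\ve$ explicitly, determine all of its subsequential limits as $(\ve,y,\de)\to(0^+,x,0)$, and match them against the formulas for $G^*$ and $G_*$ recorded in Remark \ref{F^*F_*-computation}. The first point is that the parameter $\de$ is essentially inert: by additivity with constants, $\mu_p^\ve[\phi+\de]=\mu_p^\ve[\phi]+\de$, so for $y\in\Om$ one has
\begin{equation*}
\frac{\cA_\ve(\phi(y)+\de,y,\phi+\de)}{\ve}=\frac{2(N+p)}{p}\,\frac{\phi(y)-\mu_p^\ve[\phi](y)}{r_\ve(y)^2},
\end{equation*}
with no dependence on $\de$, while for $y\in\Ga$ the quotient is simply $\phi(y)+\de-g(y)\to\phi(x)-g(x)$. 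Thus everything reduces to controlling the interior quotient above. When $x\in\Om$ this is immediate: since $\Om$ is open, every $y$ close enough to $x$ lies in $\Om$, only the interior branch occurs, and Proposition \ref{uni-amvp}, applied on a small closed ball $\ol{A}\subset\Om$ with $x\in A$ and $\na\phi\ne0$ on $\ol{A}$, gives that the interior quotient converges uniformly to $-\De_p^G\phi(x)$. Hence both $\limsup$ and $\liminf$ equal $-\De_p^G\phi(x)=G^*=G_*$, as required.

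The delicate case is $x\in\Ga$, where a sequence $(\ve_n,y_n,\de_n)\to(0,x,0)$ may have centers $y_n\in\Ga$ or $y_n\in\Om$. Along indices with $y_n\in\Ga$ the quotient tends to $\phi(x)-g(x)$. Along indices with $y_n\in\Om$, set $s_n=r_{\ve_n}(y_n)$; then $s_n\to0$ and the ball $B_{s_n}(y_n)$ stays inside $\Om$. The obstacle is that $s_n=\min[\ve_n,\dist(y_n,\Ga)]$ may be dictated by $\dist(y_n,\Ga)\to0$ rather than by $\ve_n$, so Proposition \ref{uni-amvp} cannot be invoked directly at the boundary point. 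I would circumvent this by extending $\phi$ to $\tilde\phi\in C^2(\RR^N)$ with $\na\tilde\phi\ne0$ on a ball $B_\rho(x)$ (possible since $\na\phi(x)\ne0$); because $B_{s_n}(y_n)\subset\Om$, one has $\mu_p^{\ve_n}[\phi](y_n)=\nu_p^{s_n}[\tilde\phi](y_n)$. Applying Proposition \ref{uni-amvp} to the \emph{auxiliary} domain $B_\rho(x)$ with $A=B_{\rho/2}(x)$ yields, for radius exactly $r$,
\begin{equation*}
\sup_{y\in\ol{B_{\rho/2}(x)}}\left|\frac{\nu_p^{r}[\tilde\phi](y)-\tilde\phi(y)}{r^2}-\frac{p}{2(N+p)}\,\De_p^G\tilde\phi(y)\right|=:\om(r)\longrightarrow0\quad(r\to0),
\end{equation*}
since for $y\in\ol{B_{\rho/2}(x)}$ and small $\ve$ the relevant $r_\ve(y)$ on $B_\rho(x)$ equals $\ve$. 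Evaluating this uniform estimate at $y=y_n$, $r=s_n$, and using continuity of $\De_p^G\tilde\phi$ at $x$ with $\De_p^G\tilde\phi(x)=\De_p^G\phi(x)$, I conclude that the interior quotient tends to $-\De_p^G\phi(x)$ along these indices too.

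Combining the two regimes, every subsequential limit of $\cA_\ve(\phi(y)+\de,y,\phi+\de)/\ve$ is either $-\De_p^G\phi(x)$ or $\phi(x)-g(x)$, and both are attained: the former by letting $y\to x$ through $\Om$, the latter by taking $y\equiv x\in\Ga$. Therefore the $\limsup$ equals $\max\{-\De_p^G\phi(x),\phi(x)-g(x)\}$ and the $\liminf$ equals $\min\{-\De_p^G\phi(x),\phi(x)-g(x)\}$, which are exactly $G^*$ and $G_*$ by Remark \ref{F^*F_*-computation}. The main obstacle is the uniform asymptotic mean value estimate near the boundary point, handled above by the $C^2$-extension and the auxiliary-domain trick; the remainder is merely bookkeeping of the two branches of $\cA_\ve$ and of whether the centers $y_n$ approach $x$ from inside $\Om$ or along $\Ga$.
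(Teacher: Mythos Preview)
Your proof is correct and follows the same route as the paper: eliminate $\de$ via additivity with constants, split into the interior and boundary branches of $\cA_\ve$, and invoke the asymptotic mean-value expansion (Proposition \ref{uni-amvp}) for the interior quotient while the boundary quotient converges by continuity. Your $C^2$-extension and auxiliary-domain argument for the case $x\in\Ga$ is more explicit than the paper's terse appeal to Proposition \ref{uni-amvp} (which implicitly leans on \cite[Theorem 3.2]{IMW} and the hypothesis $\phi\in C^2(\ol\Om)$), but the underlying idea is the same.
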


\begin{proof}
By using elementary properties of $p$-means (see \cite[Proposition 2.7]{IMW}), for every $\de\in\RR$, we have that 
\begin{multline*}
\frac{{\cA}_\ve(\phi(y)+\de, y, \phi+\de)}{\ve}= \\
\frac{2(N+p)}{p}\,\frac{\phi(y)+\de-\mu_p^\ve[\phi+\de](y)}{r_\ve(y)^2}= 
\frac{2(N+p)}{p}\,\frac{\phi(y)-\mu_p^\ve[\phi](y)}{r_\ve(y)^2}
\ \mbox{ if } \ y\in\Om,
\end{multline*}
and 
$$
\frac{{\cA}_\ve(\phi(y)+\de, y, \phi+\de)}{\ve}
=\phi(y)+\de-g(y)\quad\text{if \,}y\in\Ga. 
$$
Then if $x\in\Omega$, by the uniform convergence claimed in Lemma \ref{uni-amvp} 
and the continuity of $\phi$ and $g$, we then simply infer that 
\begin{multline*}
\limsup_{(\ve, y, \de)\to(0^+, x, 0)}\frac{
{\cA}_\ve(\phi(y)+\de, y, \phi+\de)
}{\ve}= \\
\begin{cases}
-\Delta^G_p\phi(x) \ &\mbox{ if } \ x\in\Om, \\
\max\left\{
-\De^G_p\phi(x), \phi(x)-g(x)
\right\} \ &\mbox{ if } \ x\in\Ga.
\end{cases}
\end{multline*}
The first claim is thus proved. The second claim follows similarly. 
\end{proof}

We are now ready to prove our final result. 

\begin{proof}[\bf Proof of Theorem \ref{th:limit-of-harmonious}]
Up to re-defining $\ve_0$, Theorem \ref{thm:existence-uniqueness-harmonious} tells us that, for every $\ve\in (0,\ve_0)$, there exists a unique function $u^\ve\in C(\ol{\Om})$ that solves the equation 
\begin{equation}
\label{equation-A-eps}
{\cA}_\ve(u^\ve(x),x,u^\ve)=0 \ \mbox{  for } \ x\in\ol{\Om}.
\end{equation}
Also, by Lemma \ref{lem3} (i), 
we have that $u^\ve$ is bounded in $\ol{\Om}$ from below and above 
by the minimum and the maximum of $g$ on $\Ga$, and hence 
independently of $\ve$. Thus, by this property, the functions defined for 
$x\in\ol{\Om}$ by 
$$
u_*(x)=\liminf_{(\ve,y)\to(0^+,x)}u^\ve(y)
\quad\mbox{and}\quad
u^*(x)=\limsup_{(\ve,y)\to(0^+,x)}u^\ve(y)
$$
are bounded in $\ol{\Om}$. Also, $u_*$ and $u^*$ are lower semi-continuous and, respectively, upper semi-continuous in $\ol{\Om}$, by a standard result. 
\par
The strategy of the proof is to show that $u_*$ and $u^*$ are a viscosity supersolution 
and a viscosity subsolution of $G=0$ in $\ol{\Om}$. In fact, if we prove that, 
the comparison principle in Theorem \ref{wcp-p-laplace} would give that $u^*\le u_*$ on $\ol{\Om}$. Thus, since $u_*\le u^*$ on $\ol{\Om}$ by construction, we would infer that $u_*\equiv u^*$ on $\ol{\Om}$. As a consequence, the function defined on $\ol{\Om}$ by 
$u=u_*\equiv u^*$ would be continuous and a viscosity solution of $G=0$ on $\ol{\Om}$. Then, $u$ would be the unique viscosity solution of \eqref{dirichlet-harmonic} 
by \cite[Theorem 3.3]{TMP} and Remark \ref{F^*F_*-computation}. 
\par
We also point out that, while $u^\ve$ converges pointwise to $u$ by construction, 
we can further prove that this convergence is uniform on $\ol{\Om}$. In fact, 
$u$ turns out to be the limit of the monotonic sequences defined 
for $k\in\NN$ and $x\in\ol{\Om}$ by 
\begin{eqnarray*}
&&v_k(x)=\sup\Bigl\{
u^\ve(y)\,|\,0<\ve<1/k, \,|y-x|<1/k, \,y\in\ol{\Om}
\Bigr\},\vspace{2pt} \\
&&w_k(x)=\inf\Bigl\{
u^\ve(y)\,|\,0<\ve<1/k, \,|y-x|<1/k, \,y\in\ol{\Om}
\Bigr\}. 
\end{eqnarray*}
The uniform convergence on $\ol{\Om}$ of these sequences follows from Dini's monotone convergence theorem (see \cite[Theorem 7.13]{Ru}), 
since they are monotonic,  $\ol{\Om}$ is compact, $v_k$ is lower semicontinuous, $w_k$ is upper semicontinuous, and $u$ is continuous on $\ol{\Om}$. The uniform convergence of $u^\ve$ then follows by 
observing that 
$$
w_k(x)\le u^\ve(x)\le v_k(x) \ \mbox{ for  } \ x\in\ol{\Om}
\quad\mbox{and}\quad 0<\ve<1/k. 
$$

Therefore, to complete the proof, we are left to show that $u_*$ and 
$u^*$ are a viscosity supersolution and a viscosity subsolution of $G=0$ in $\ol{\Om}$. We shall prove that fact only for $u^*$, since 
for $u_*$ we can proceed similarly. 
\par
We preliminarily notice that, 
by \cite[Theorem 2.5]{IMW}, the mapping ${\cA}_\ve$ is decreasing 
in the third variable, in the sense that for $(s,x)\in\Bbb R\times\ol{\Om}$ and $u,v\in C(\ol{\Om})$, it holds
$$
{\cA}_\ve(s,x,u)\geq{\cA}_\ve(s,x,v)
\quad\text{if \,}u\leq v\quad\text{in \,}\ol{\Om}. 
$$

Now, let $(x,\phi)\in\ol{\Om}\times C^2(\ol{\Om})$ with $\nabla\phi(x)\ne0$ be such that $u^*-\phi$ has a local maximum at $x$ with $u^*(x)=\phi(x)$. 
Without loss of generality, we can always suppose that the maximum is global and strict, 
that is $u^*-\phi<u^*(x)-\phi(x)=0$ in $\ol{\Om}\setminus\{x\}$. 
By a standard argument in the theory of viscosity solutions (see \cite{Ko}), 
we know that there exists a sequence of elements $(\ve_j,x_j)\in (0,\ve_0)\times\ol{\Om}$ such that,
for each fixed $j\in\NN$,  $x_j$ is a global maximum point 
for $u^{\ve_j}-\phi$ and
$$
(\ve_j, x_j, u^{\ve_j}(x_j))\to (0,x, u^*(x)) \ \mbox{ as } \ j\to\infty.
$$
If we set $\de_j=u^{\ve_j}(x_j)-\phi(x_j)$, then $\de_j\to 0$ as $j\to\infty$ and $u^{\ve_j}-\phi\le\de_j$ on $\ol{\Om}$. 
\par
Now, we use \eqref{equation-A-eps} at $x_j$ for $\ve=\ve_j$ and $u=u^{\ve_j}$, and infer that 
\begin{multline*}
0={\cA}_{\ve_j}(u^{\ve_j}(x_j),x_j, u^{\ve_j})= \\
{\cA}_{\ve_j}(\phi(x_j)+\de_j, x_j, u^{\ve_j})
\ge{\cA}_{\ve_j}(\phi(x_j)+\de_j, x_j, \phi+\de_j).
\end{multline*}
Here, the last inequality follows from the aforementioned monotonicity of ${\cA}_{\ve_j}$. Then since $\nabla\phi(x)\ne0$, we can apply Lemma \ref{consistency-p} 
to obtain that 
\begin{multline*}
0\geq\liminf_{j\to\infty}\frac{{\cA}_{\ve_j}(\phi(x_j)+\de_j,x_j,\phi+\de_j)}{\ve_j}\ge \\
\liminf_{(\ve,y,\de)\to(0^+,x,0)}\frac{{\cA}_\ve(\phi(y)+\de,y,\phi+\de)}{\ve}=F_*(x,\phi(x),\na\phi(x),\na^2\phi(x)). 
\end{multline*}
Hence, $u^*$ is a viscosity subsolution of $G=0$ on $\ol{\Om}$. 
The proof of Theorem \ref{th:limit-of-harmonious} is complete. 
\end{proof}

\section*{Acknowledgements}
The third author was partially supported by the Gruppo Nazionale di Analisi Matematica, Probabilit\`{a} e
Applicazioni (GNAMPA) of the Istituto Nazionale di Alta Matematica (INdAM). Part of the research related to this paper was carried out during a his visit to Kanazawa University and Osaka University. He wishes to thank those institutions for their warm hospitality.

\end{document}